\newcommand{\Ph}{\operatorname{\bbP h}}
\newcommand{\bPh}{\operatorname{\overline{\bbP h}}}
\newcommand{\PI}{\operatorname{\bbP I}}
\newcommand{\Sh}{\operatorname{\bbS h}}
\newcommand{\bSL}{\operatorname{\overline{\SL}}}
\newcommand{\bA}{\overline{A}}
\newcommand{\bO}{\overline{O}}
\newcommand{\bF}{\overline{F}}
\newcommand\DP{\operatorname{CP}}
\newcommand\bSH{\operatorname{\overline{\mathbb{S}H}}}
\newcommand\SH{\operatorname{\mathbb{S}H}}
\newcommand\SIp{\operatorname{\mathbb{S}I_+}}
\newcommand\SI{\operatorname{\mathbb{S}I}}
\newcommand\bq{\bar q}
\newcommand\bM{\overline{M}}
\newcommand\bG{\overline{G}}
\newcommand\diag{\operatorname{diag}}
\newcommand\ilog{\operatorname{ilog}}
\newcommand\mrurl[1]{MR. \url{http://www.ams.org/mathscinet-getitem?mr=#1}}
\newcommand\aurl[1]{\url{file:///data/ms/archive/#1}}
\newcommand\durl[1]{doi. \url{http:///dx.doi.org/#1}}
\newcommand\purl[1]{\url{file:///data/ms/print/#1}}
\newcommand\surl[1]{\url{file:///data/ms/screen/#1}}
\newcommand\zurl[1]{Zbl. \url{http://www.emis.de/zmath-item?#1}}
\newcommand\Aurl[1]{Arxiv. \url{http://arxiv.org/abs/#1}}
\newcommand\boxb[1]{\square_b}
\numberwithin{equation}{section}
\newcommand\paperbody%
\newcommand\appendixbody%
\appendix\setcounter{equation}{0}
\newtheorem{lemma}{Lemma}
\newtheorem{proposition}{Proposition}
\newtheorem{theorem}{Theorem}
\newtheorem{non-theorem}{Non-Theorem}
\newtheorem*{conj}{Conjecture}
\newtheoremstyle{named}{}{}{\itshape}{}{\bfseries}{.}{.5em}{\thmnote{#3 }#1}
\theoremstyle{named}
\newtheorem*{namedtheorem}{Theorem}
\newtheoremstyle{fullnamed}{}{}{\itshape}{}{\bfseries}{.}{.5em}{\thmnote{#3}}
\theoremstyle{fullnamed}
\newtheorem*{fullnamed}{}
\theoremstyle{remark}
\newtheorem{definition}{Definition}
\newcommand\bV{\mathcal{V}_{\operatorname{b}}}
\newcommand\cV{\mathcal{V}_{\operatorname{c}}}
\newcommand\bo{\operatorname{b}}
\newcommand\Ad{\operatorname{Ad}}
\newcommand\bT{{}^{\bo}T}
\newcommand\cFTs{{}^{\Phi}\overline{T}\kern-1pt{}^*}
\newcommand\bN{{}^{\bo}N}
\newcommand\Tr{\operatorname{Tr}}
\newcommand\SO{\operatorname{SO}}
\newcommand\GL{\operatorname{GL}}
\newcommand\SL{\operatorname{SL}}
\newcommand\Fg{\mathfrak{g}}
\newcommand\Fm{\mathfrak{m}}
\newcommand\Fn{\mathfrak{n}}
\newcommand\Fk{\mathfrak{k}}
\newcommand\cF{\mathcal F}
\renewcommand\cV{\mathcal{V}}
\newcommand\dcF{{}^2\kern-1.5pt\mathcal{F}}
\newcommand\DcA{{}^2\kern-3pt\mathcal{A}}
\newcommand\DcF{{}^2\kern-1.5pt\mathcal{F}}
\newcommand\DcL{{}^2\kern-1.5pt\mathcal{L}}
\newcommand\bbC{\mathbb C}
\newcommand\bbN{\mathbb N}
\newcommand\bbP{\mathbb P}
\newcommand\bbR{\mathbb R}
\newcommand\bbS{\mathbb S}
\newcommand\bbT{\mathbb T}
\newcommand\bbZ{\mathbb Z}
\newcommand\CI{{\mathcal{C}}^{\infty}}
\newcommand\cFNs{{}^{\Phi}\overline N\kern-1pt{}^*}
\newcommand\Gr{\operatorname{Gr}}
\newcommand\Hom{\operatorname{Hom}}
\newcommand\SU{\operatorname{SU}}
\newcommand\UU{\operatorname{U}}
\newcommand\ci{${\mathcal{C}}^\infty$}
\newcommand\ha{\frac{1}{2}}
\newcommand\pa{\partial}
\newcommand\Fa{\mathfrak{a}}
\newcommand\Mif{\text{ if }}
\newcommand\Mor{\text{ or }}
\begin{document}
\title[CoSLG]
{Compactification of semi-simple Lie groups}

\author{Pierre Albin}
\author{Panagiotis Dimakis}
\author{Richard Melrose}
\address{Department of Mathematics, Massachusetts Institute of Technology}
\email{rbm@math.mit.edu}
\author{David Vogan}
\address{Department of Mathematics, Massachusetts Institute of Technology}
\email{dav@math.mit.edu}
\subjclass[2010]{Primary 32J05,
Secondary 14M27 57S20 53C35}

\begin{abstract} We discuss the `hd-compactification' of a semi-simple Lie
  group to a manifold with corners; it is the real analog of the wonderful
  compactification of deConcini and Procesi. There is a 1-1 correspondence
  between the boundary faces of the compactification and conjugacy classes
  of parabolic subgroups with the boundary face fibering over two copies of
  the corresponding flag variety with fiber modeled on the
  (compactification of the) reductive part.  On the hd-compactification
  Harish-Chandra's Schwartz space is identified with a space of conormal
  functions of rapid-logarithmic decay relative to square-integrable
  functions.
\end{abstract}

\maketitle


\section*{Introduction}

In this note we describe a systematic compactification procedure for Lie
groups. Although semisimple Lie groups are the primary focus of interest
for well known `iterative' reasons we work in the setting of real
reductive groups with compact centers.  The `hd-compactification' of such a
group, $G,$ is a compact manifold with corners, $\bG,$ with interior
identified with $G$ and with additional properties described below. The
hd-compactification is closely related to (derivable from) the wonderful
compactification of De Concini and Procesi, \cite{MR718125}, in the case of
complex adjoint groups, and to the (maximal) Satake compactification, and
more especially the Oshima compactification, \cite{Oshima}, of the
homogeneous space $G/K$ for a maximal compact subgroup $K;$ see also \cite{GJT}.

By a compactification $\bM$ of a manifold $M$ without boundary
we mean a smooth map $I:M\hookrightarrow \bM$ which is a diffeomorphism
onto the interior of a compact manifold with corners $\bM.$ Two
compactifications are \emph{equivalent} if there is a diffeomorphism
between them intertwining the inclusions into the interiors. As part of
the definition we always demand that the boundary hypersurfaces of a
compact manifold (by default meaning with corners) are embedded. This
implies that each such hypersurface $H$ has a global boundary defining
function $\rho_H\in\CI(M)$ such that $\rho _H>0$ on $\bM\setminus H,$
$H=\{\rho =0\}$ and $d\rho_H\not=0$ at $H.$ Thus $d\rho_H\big|_H$ spans the
conormal bundle to $H.$ These assumptions also imply that the components of
the intersections of the hypersurfaces, the boundary faces, are all
embedded and are naturally compact manifolds with corners.

In the category of manifolds with corners the arrows are required to be
smooth maps $f:X'\longrightarrow X''$ in the usual sense that
$f^*\CI(X'')\subset\CI(X')$ but also they are b-maps meaning that for
each boundary hypersurface $H$ of $X''$
\begin{equation*}
f^*\rho_H=%
\begin{cases}
\equiv0&\Mor\\
a\prod_{K}\rho_K^{n_{KH}},\ 0<a\in\CI(X')&\Mor\\
>0.
\end{cases}
\label{CSG.146}\end{equation*}
The maps we are most interested in are interior b-maps, in which the first
option does not occur. The powers, $n_{KH},$ in the product decomposition
over the boundary hypersurfaces of $X'$ are necessarily non-negative
integers (so the last case is where these integers all vanish).

The Lie algebra of the group of diffeomorphisms of a compact manifold
consists of the smooth vector fields tangent to all boundary faces, the
b-vector fields. Equivalently these are the smooth vector fields satisfying
$V\rho _H\in\rho_H\CI(X)$ for all boundary defining functions $\rho _H.$
The b-vector fields form a Lie algebroid, $\bV(X)=\CI(X;\bT X),$ where $\bT
X\longrightarrow TX$ has null space at each boundary point $p$ of
codimension $k$ i.e.\ lying in the interior of a boundary face $F$ of
codimension $k,$ a canonically trivial vector space $\bN_pF\subset\bT M$
which extends to a smooth subbundle over $F.$ These spaces are spanned by
the vector fields $x_i\pa_{x_i}$ in terms of local coordinates in which the
$x_i$ define the boundary hypersurfaces through $p.$

\begin{definition}\label{CSG.167} An hd-compactification of a real reductive
Lie group with compact center is a compact manifold with corners $\bG$ and a
diffeomorphism onto the interior $G\hookrightarrow \bG$ such that
\begin{enumerate}
\item[(D1)]\label{D.1}[inversion] Inversion extends to a diffeomorphism of $\bG.$ 
\item[(D2)]\label{D.2}[b-normality] The right action of $G$ extends smoothly to
  $\bG$ with isotropy algebra at each boundary point projecting to span the
  b-normal space.
\item[(D3)]\label{D.3}[b-transitivity] The combined action of $G\times G$ on left
  and right is b-transitive, i.e.\ has Lie algebra spanning $\bT\bG.$
\item[(D4)]\label{D.4}[minimality]
Near the interior of a boundary face of codimension $d$
  the span of the Lie algebra for the right action contains vector fields
  $x _jv_j$ where the $x_j$ are defining functions for the local
  boundary hypersurfaces and the $v_j$ are locally independent tangent
  vector fields not themselves in the span of the Lie algebra.
\end{enumerate}
\end{definition}

\begin{namedtheorem}[Main]\label{CSG.165} Any real
  reductive group with compact center has an hd-compact\-ification
\end{namedtheorem}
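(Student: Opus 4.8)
\emph{Sketch of the argument.} The plan is to build $\bG$ by induction on the split rank $r=\dim\mathfrak a$, realising its proper boundary faces as fibre bundles over pairs of flag varieties whose fibres are hd-compactifications of smaller reductive groups, and then to read off (D1)--(D4) from that description. Fix a maximal compact $K\subset G$, a Cartan involution $\theta$ with $\mathfrak g=\mathfrak k\oplus\mathfrak p$, a maximal abelian $\mathfrak a\subset\mathfrak p$ with restricted root system $\Sigma\subset\mathfrak a^*$ and simple roots $\Delta=\{\alpha_1,\dots,\alpha_r\}$, and for $F\subseteq\Delta$ the standard parabolic with Langlands decomposition $P_F=M_FA_FN_F$ and its opposite $P_F^-=M_FA_FN_F^-$. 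Structure theory gives that the $P_F$ represent the conjugacy classes of parabolics, that $M_F$ is again reductive with compact centre, and that $M_F$ has split rank $|F|$, strictly less than $r$, whenever $F\subsetneq\Delta$; by the inductive hypothesis each such $M_F$ carries an hd-compactification $\bM_F$, and moreover $\bM_{F'}$ is the boundary face of $\bM_F$ indexed by $F'$ for $F'\subseteq F$. The base case $r=0$ is the compact group, where $\bG=G$. The face of $\bG$ attached to $F$ will be the total space of a bundle $\cB_F\to (G/P_F)\times(G/P_F^-)$ with fibre $\bM_F$; the boundary hypersurfaces are the $\cB_F$ with $|F|=r-1$, a face of codimension $d$ comes from $|F|=r-d$, and the deepest corner is $\cB_\emptyset$, a bundle over $(G/P_\emptyset)\times(G/P_\emptyset^-)$ with fibre the compact group $M=M_\emptyset$.

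The first step is an explicit local model near $\cB_\emptyset$. On the positive chamber write $a=\exp H$, $H\in\overline{\mathfrak a^+}$, and put $x_j=e^{-\alpha_j(H)}$; via the Cartan decomposition $g=k_1ak_2$ these give, away from the walls and for $H$ large, coordinates $(k_1M,\,x_1,\dots,x_r,\,k_2M)$ on an open subset of $G$ near infinity, with $k_iM\in K/M$. In these coordinates the left and right translations of $G$ act by the flag actions on $k_1M$ and $k_2M$ together with the actions of $\mathfrak a$ and of the root spaces: translation by $H_0\in\mathfrak a$ on the right multiplies $x_j$ by $e^{-t\alpha_j(H_0)}$, translation by a root vector $X_{-\alpha_j}$ on the right moves $k_1M$ along the $G/P_\emptyset$ direction but damped by the factor $x_j=e^{-\alpha_j(H)}$, and the remaining root vectors move the flag factors or, through $\mathfrak m$, act on the $M$-component. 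Adjoining $x_j\in[0,\infty)$, dividing by $Z_K(\mathfrak a)$ and sweeping by $K\times K$ produces a manifold-with-corners model whose corner is $\cB_\emptyset$; intersecting with $\{x_j=0:\ j\notin F\}$ and inserting the inductively constructed $\bM_F$ in the fibre direction yields the models $\cB_F$ together with a collar identification of a punctured neighbourhood of each $\cB_F$ with an open subset of $G$.

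The second step is to glue $G$ and the $\cB_F$ along these collars into a compact Hausdorff manifold with corners $\bG$: for $F'\subset F$ the compatibility of the two collars is precisely the inductive statement that $\bM_{F'}$ is the $F'$-face of $\bM_F$ together with functoriality of the flag bundles, and the corner structure being the standard $[0,\infty)^k\times\RR^{n-k}$ follows from the monomial nature of the change of the $e^{-\alpha_j(H)}$ between adjacent chambers. Granting the glued object, inversion $g\mapsto g^{-1}$ carries the $F$-model to the $F$-model, interchanging the two flag factors (up to the opposition involution) and acting by inversion on the fibre $\bM_F$, so it extends to a diffeomorphism of $\bG$, giving (D1). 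The right action of $G$ extends because on $\cB_F$ it is the flag action together with the inductively extended right action on $\bM_F$; the $G\times G$ orbits are exactly the interior and the $\cB_F$, which are b-open, and pooling the left and right motions of the flag factors, the left and right $\mathfrak m_F$-actions on the fibre (b-transitive by induction) and the $\mathfrak a_F$-scalings below shows $G\times G$ spans $\bT\bG$, giving (D3). At a boundary point $p$ of $\cB_F$ the whole torus $A_F$ lies in the isotropy of the right action and its fundamental vector fields are the scalings $\sum_{j\notin F}(-\alpha_j(H_0))\,x_j\pa_{x_j}$, which exhaust $\bN_p\cB_F$ as $H_0$ ranges over $\mathfrak a_F$; that is (D2). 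Finally, for $j\notin F$ the right root vector $X_{-\alpha_j}$ contributes a vector field $x_jv_j$ with $v_j$ the undamped motion of $G/P_F$ in that direction, and $v_j$ is not in the span of the right Lie algebra at $p$ (no right translation produces undamped motion along the first flag factor), which gives the minimality (D4).

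I expect the real obstacle to be the gluing of the second step: showing that the flag bundles $\cB_F$ and the open group $G$ fit together along the collars into a genuinely \emph{smooth} manifold with corners, and --- what is essential for (D4) rather than a weaker normality --- that one has introduced the minimal degeneration, so that near each face exactly the expected number of boundary defining functions appears. The combinatorics of how the monomials $e^{-\alpha_j(H)}$ transform under the little Weyl group and under the transition maps of the flag bundles is where the work concentrates; the inductive control of $\bM_F$ and the explicit $KAK$ coordinates are what keep it tractable. An alternative route, matching the description of $\bG$ as the real analogue of the wonderful compactification, would start from the De Concini--Procesi compactification of the complexification of the adjoint group, pass to an appropriate real locus, and perform an iterated real blow-up of its boundary strata; the obstacle then becomes the determination of which strata to blow up, and in which order, so that (D1)--(D4) hold, but the same monomial bookkeeping reappears there.
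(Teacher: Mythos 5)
Your proposal follows essentially the same route as the paper's Section~\ref{DC}: induction on rank, boundary faces realised as bundles with fibre the inductively compactified Levi $\bM_S$ over a product of two flag varieties, a partial compactification of $A_S$ in the coweight coordinates, gluing via the Cartan/$KAK$ maps $(k_1,m,k_2,a)\mapsto k_1mak_2$, and verification of (D1)--(D4) by the same mechanisms (opposite parabolics for inversion, $A_P$-scaling for b-normality, damped simple root vectors for minimality). The point you flag as the real obstacle --- smoothness and consistency of the gluing on overlaps --- is exactly what the paper handles through the factorisation $\gamma_{S_2}=\gamma_{S_1}\circ\gamma_{S_1S_2}$ of the gluing maps for nested faces, so your sketch is an accurate blueprint of the argument actually given.
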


\begin{conj}\label{CSG.222}  The hd-compactification of a
  real-reductive Lie group with compact center is
  unique up to equivalence, intertwining the right and left actions, and so
  defines a functor from these groups and isomorphisms to compact manifolds
  with corners and diffeomorphisms.
\end{conj}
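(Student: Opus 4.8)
The statement to establish is that any two hd-compactifications of $G$ are related by an equivalence intertwining the two actions, and that this produces a functor. I would first reduce the functoriality to the uniqueness. Given an isomorphism $\phi\colon G\to G'$ of reductive groups with compact center and hd-compactifications $\bG,\bG'$, transport the corner structure of $\bG'$ through $\phi$ to obtain a second hd-compactification structure on $G$; uniqueness then furnishes a canonical equivalence $\bG\to\bG'$ restricting to $\phi$ on the interior. Preservation of identities and composites follows from the \emph{uniqueness} of such extensions (two extensions of the same interior map must coincide), giving the asserted functor. Moreover the intertwining is automatic: by (D1) inversion extends and by (D2) the right action extends, so the left action $a\cdot x=\iota(\iota(x)a^{-1})$ extends and $G\times G$ acts smoothly via $(a,b)\cdot x=axb^{-1}$; since $\phi$ already intertwines these actions on the dense interior, any extension to a diffeomorphism of compact manifolds with corners intertwines them by continuity. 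Thus everything reduces to the single assertion: two hd-compactifications $\bG_1,\bG_2$ of the same $G$ are equivalent rel interior.

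For this I would use the graph-closure method. Form $\Gamma=\overline{\{(x,x):x\in G\}}\subset\bG_1\times\bG_2$, the closure of the diagonal. The diagonal $G\times G$ action $(a,b)\cdot(x_1,x_2)=(ax_1b^{-1},ax_2b^{-1})$ extends smoothly to $\bG_1\times\bG_2$ and preserves the interior diagonal, hence preserves $\Gamma$. The two projections $\pi_i\colon\Gamma\to\bG_i$ are proper (restrictions to a closed subset of a compact space), are the identity on the interior, and have closed dense image hence are onto. The goal is to show $\Gamma$ is a compact manifold with corners and that each $\pi_i$ is a diffeomorphism; then $\pi_2\circ\pi_1^{-1}\colon\bG_1\to\bG_2$ is the desired equivalence, automatically intertwining the actions as above.

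The essential structural input, which I would derive from the axioms \emph{alone} so that it applies to any hd-compactification and not merely a constructed one, is the classification of the boundary. At a boundary point $p$ in the interior of a face of codimension $d$, (D3) makes the infinitesimal action $\Fg\oplus\Fg\to\bT_p\bG$ onto, so the isotropy $\mathfrak{h}_p\subset\Fg\oplus\Fg$ has dimension $n=\dim G$. Using (D2) (the right isotropy projecting onto the $d$-dimensional b-normal space) together with the minimality (D4) and the b-map powers, I expect to show that $\mathfrak{h}_p$ is conjugate to the parabolic-graph algebra
\[
\mathfrak{h}_p=\{(\xi,\eta)\in\mathfrak{p}\oplus\mathfrak{p}^{-}:\ \xi\equiv\eta\ \mathrm{mod}\ \mathfrak{n}\oplus\mathfrak{n}^{-}\},
\]
of dimension $\dim\mathfrak{l}+2\dim\mathfrak{n}=n$, for a parabolic $\mathfrak{p}=\mathfrak{l}\oplus\mathfrak{n}$ of parabolic rank $d$ and its opposite. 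This identifies the faces of each $\bG_i$ with the same data, the conjugacy classes of parabolic subgroups, and exhibits each face as fibered over $G/P\times G/P$ with fiber the hd-compactification $\overline{L}$ of the Levi $L$. Since $\dim L<\dim G$, induction on dimension (base case: $G$ compact, where $\bG=G$ has no boundary and uniqueness is trivial) gives a canonical equivalence of the fibers of $\bG_1$ and $\bG_2$ over each common base $G/P\times G/P$. This matches the boundary faces of $\bG_1$ and $\bG_2$ compatibly, showing that over the interior of each stratum $\Gamma$ is the graph of a diffeomorphism and is smooth, and that $\pi_i$ there is a local diffeomorphism.

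The main obstacle is the passage from this stratum-by-stratum identification to smoothness of $\Gamma$ across the corners, i.e.\ showing that the identity on the interior extends to a genuine diffeomorphism rather than merely a homeomorphism respecting the stratification. Concretely, one must prove that the two compactifications induce the \emph{same} b-structure up to the equivalence: that the boundary defining functions of $\bG_1$ and $\bG_2$ are related, near each deep corner, by positive smooth factors with matching exponents, so that the normal (radial) directions glue smoothly. This is exactly the point controlled by minimality (D4) and the non-negative integer powers $n_{KH}$ in the b-map condition, and it is where the real, manifold-with-corners setting diverges from the algebraic uniqueness of the De Concini--Procesi compactification. I expect to handle it by producing, from the isotropy description of $\mathfrak{h}_p$, canonical normal forms for the defining functions in terms of the $\Fg\times\Fg$-action (the radial vector fields $x_j v_j$ of (D4) are intrinsically determined by the action), and then checking that the induced collar identifications are compatible; granting this, $\pi_1$ and $\pi_2$ are proper local diffeomorphisms and bijections, hence diffeomorphisms, completing the proof of uniqueness and thereby the functor.
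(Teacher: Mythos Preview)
The statement you are attempting to prove is stated in the paper as a \emph{conjecture}, not a theorem; the paper gives no proof of it. So there is nothing to compare your argument to on the paper's side, and the relevant question is whether your proposal actually closes the conjecture.

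It does not. Your outline is a reasonable strategy---reduce functoriality to uniqueness, take the closure of the diagonal in $\bG_1\times\bG_2$, classify boundary strata via the isotropy of the $G\times G$ action, and use induction on the Levi---but you yourself identify the decisive step and then do not carry it out. The claim that the isotropy algebra at a boundary point of codimension $d$ must be the parabolic-graph algebra for a parabolic of rank $d$ is asserted (``I expect to show''), not proved; deriving this from (D1)--(D4) alone, with no reference to the explicit construction, is already nontrivial. More seriously, the passage you flag as ``the main obstacle''---matching boundary defining functions of $\bG_1$ and $\bG_2$ up to positive smooth factors so that the collar identifications glue smoothly across corners---is exactly the content of the conjecture, and you only say you ``expect to handle it'' by extracting canonical normal forms from (D4). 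No such normal form is produced, and it is not clear that (D4) as stated is strong enough: it guarantees the existence of elements $x_jv_j$ with the $v_j$ independent modulo the span of the Lie algebra, but it does not obviously pin down the defining functions $x_j$ up to positive smooth multiples, nor rule out reparametrizations of the type $x\mapsto x(1+x\log x)$ that are smooth in one structure but not the other.

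In short, your proposal is a plausible plan of attack on an open problem, with the hard analytic step left as an expectation. That is consistent with the paper's choice to label the statement a conjecture.
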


\noindent In view of the invariance shown below this hd-compactification
extends to spaces which are left and right principal $G$ spaces, reduced to
$G$ by fixing a point, for a real reductive group $G.$

One direct consequence of the properties demanded above is that the action,
on left or right, of a maximal compact subgroup $K\subset G$ on $\bG$ is
necessarily free. Thus $\bG/K$ defines a compactification, essentially the
maximal Satake compactification, of $G/K,$ see Borel and Ji
\cite{MR2189882}.

Below we give a detailed differential-geometric description of the
hd-compact\-ification with subsequent geometric analysis in mind, see
Mazzeo an Vasy \cite{MR2175410}, \cite{MR2075483} and Parthasarathy and Ramacher
\cite{MR3217053}. In particular the boundary faces of $\bG$ are shown to be
in 1-1 correspondence with the conjugacy classes of parabolic subgroups of
$G$ and hence with the subsets $S\subset D$ of the nodes of the reduced
Dynkin diagram. If $\cF_S$ is the flag variety parameterizing the
parabolics associated to $S$ then the corresponding boundary face of $\bG$
can be realized as a bundle
\begin{equation}
\xymatrix{
\bM\ar@{-}[r]&\bF_S\ar[d]\\
&\cF_S\times\cF_S
}
\label{CSG.161}\end{equation}
where the model fibre is the compactification of the real-reductive group
in the Langlands decomposition $\cF_S\ni P=M_PA_PN_P$ of an associated parabolic. More
geometrically the fibre at $(P',P)$ is the hd-compactification of the space
of elements of $G$ such that $M_{P'}g=gM_P.$ The
isotropy group for the right action of $G$ at a point of $F_S$ is the
associated normal subgroup $A_PN_P.$

We show how to assemble a compactification
\begin{equation}
\bG=G\sqcup\bigcup_{S\subsetneq D}F_S
\label{CSG.162}\end{equation}
from the unions of the interiors of the boundary faces. To do so
we define `gluing maps' from the putative normal bundles to the $F_S$
which have fiber at a point of $F_S$ a partial compactification of the
corresponding group $A_P$ and show that this is an
hd-compactification.

For analytic purposes it is of prime importance to describe the behavior of
the invariant vector fields for the action of $G$ on $\bG;$ this can be
seen from the root space decomposition as discussed by Knapp
\cite{MR1880691}, Wallach \cite{MR929683} and Varadarajan
\cite{MR1725738}. To this end we observe that the flag varieties carry
iterated tangent structures. If $P\in\cF_S$ is a parabolic subgroup in a
fixed conjugacy class then the transitive action of $K,$ by conjugation, on
$\cF_S$ has isotropy group $K\cap P$ at $P.$ In the Langlands decomposition
$P=M_PA_PN_P$ the Lie algebra $\Fa_P$ of $A_P$ has rank $s=\#(D\setminus
S).$ The positive root vectors, joint eigenvectors for the conjugation
action of $\Fa_P$ on the Lie algebra span the Lie algebra of $\Fn_P.$ The
eigen-decomposition of $\Fk$ defines a surjective map $\Fk\longrightarrow
\Fn_P$ with null space $\Fk\cap\Fm_P,$ the Lie algebra of the isotropy
group. The decomposition of $\Fn_P$ into eigenspaces with joint eigenvalues
$\alpha \cdot\Fa_P$ thus induces a corresponding filtration
\begin{equation}
E^\alpha _P\subset T_P\cF_S.
\label{CSG.202}\end{equation}
The action of $K$ extends the filtration to $T\cF_S$ by subbundles
labeled by $s$-multi\-indices satisfying
\begin{equation}
\begin{gathered}
E^\alpha \subset T\cF_S\ \forall\ \alpha \in \bbN^s,\ E^\alpha
=T\cF_S\text{ for some }\alpha \\
\alpha \le\beta\Longrightarrow E^\alpha\subset E^\beta,\\
E^\alpha +E^\beta \subset E^{\alpha+\beta},\\
[\cV_\alpha ,\cV_\beta ]\subset\cV_{\alpha +\beta },\ \cV_\alpha =\CI(\cF_S;E^\alpha).
\end{gathered}
\label{CSG.163}\end{equation}

The iterated structures on intersecting boundary faces are related in such
a way that they induce a Lie algebroid structure $E_R\subset \bT\bG$
consisting of the vector fields tangent to all boundary faces, to the
fibers of the left fibration to $\cF_S$ induced by \eqref{CSG.161} for each
$S$ and with normal vanishing properties corresponding to the $E^\alpha.$

As noted above there is a direct relationship between the wonderful
compactification of the adjoint form of a complex semisimple group and the
hd-compactification.

\begin{fullnamed}[Prop-W]\label{CSG.166} The real blow up of the exceptional divisors
  in the wonderful compactification of the adjoint group of a complex
  semisimple group is an hd-compactification.
\end{fullnamed}

At the boundary of the hd-compactification Haar measure takes the form 
\begin{equation}
dg=\rho^{-\sigma}_*\nu_{\bo}
\label{CSG.169}\end{equation}
where $\nu_{\bo}$ is a non-vanishing smooth b-measure, $\rho_*$ is a
vector of defining functions for the boundary hypersurfaces (corresponding to maximal
parabolic subgroups, so to the sets $D\setminus\{n\})$ for $n\in D)$  and $\sigma$ is
the multiindex given by the sum of the positive roots.

As already noted in \cite{1812.03883} the utility of the
hd-compactification is illustrated by the fact that Harish-Chandra's Schwartz
space takes a relatively simple form with respect to the
hd-compactification.

\begin{fullnamed}[Prop-H-C]\label{CSG.170} Harish-Chandra's Schwartz space is the
  space of conormal functions with respect to the boundary of $\bG$ of
  log-rapid decay with respect to $L^2(G).$
\end{fullnamed}
\noindent This was proved for $\SL(2,\bbR)$ in \cite{1812.03883} but the argument is sufficiently
general to apply here.

In \S\ref{GA} general properties of groups actions on manifolds with
corners are discussed and the characterization of Harish-Chandra's Schwartz
space is recalled from \cite{1812.03883}. The special case of $\SL(2,\bbR)$
is treated in \S\ref{nR} where the hd-compactification is obtained by
blow-up of the sphere in the homomorphisms. For the closely related case of
$\SL(n,\bbC)$ both the wonderful and the hd-compactification are obtained
by similar methods in \S\ref{nC}. The construction in the general case is
contained in \S\ref{DC}.

\paperbody

\section{Group actions on manifolds with corners}\label{GA}

First consider the smooth action of a compact Lie group on a compact
manifold with corners; the fact that the boundaries are `one-sided' imposes
constraints on such actions since the isotropy algebra at boundary points
must act trivially on the normal bundle to the boundary.

\begin{lemma}\label{CSG.157} If a compact Lie group acts smoothly on a
 connected compact manifold with corners and acts trivially on one boundary
 face then it acts trivially.
\end{lemma}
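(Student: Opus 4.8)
The plan is to exploit the one-sidedness of the boundary: an action of a compact group $\Gamma$ fixing a boundary face $F$ pointwise must, by continuity and compactness, behave tamely in the normal directions to $F$, and then the orbit structure forces the fixed-point set to be open.

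\emph{Step 1 (local normal form near $F$).} Pick a point $p$ in the interior of the boundary face $F$ on which $\Gamma$ acts trivially, say of codimension $k$. Using an averaged Riemannian metric (or an averaged collar), one gets $\Gamma$-invariant local coordinates $(x_1,\dots,x_k,y)$ near $p$ in which $x_1,\dots,x_k\ge 0$ are boundary defining functions and $F=\{x_1=\dots=x_k=0\}$. Since $\Gamma$ fixes $p$, it acts linearly on $T_p\bM$ preserving the flag of faces, hence preserving the decomposition into the span of the $\pa_{x_i}$ and the span of the $\pa_{y_j}$; as $\Gamma$ fixes all of $F$ it acts trivially on the $\pa_{y_j}$-directions. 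So the linearized action at $p$ is given by a homomorphism $\Gamma\to \GL(k,\RR)$, and because $\Gamma$ must map the positive orthant $\{x_i\ge 0\}$ to itself (the boundary is one-sided, so the set of faces through $p$ is permuted, but each face $\{x_i=0\}$ is individually fixed since it lies over $F$), this homomorphism lands in the diagonal subgroup with \emph{positive} entries, i.e.\ in $(\RR_+)^k$. A compact subgroup of $(\RR_+)^k$ is trivial, so the linearized action at $p$ is trivial.

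\emph{Step 2 (from trivial linearization to a fixed neighbourhood).} By the Bochner linearization theorem for compact group actions (applied in the manifold-with-corners setting, using the invariant metric from Step 1 and the exponential map, which respects the corner structure because the metric is a product near $F$), the $\Gamma$-action near $p$ is conjugate to its linearization, which is trivial. Hence $\Gamma$ fixes an open neighbourhood of $p$ in $\bM$ pointwise. Therefore the set $U$ of points of $\bM$ having a neighbourhood fixed pointwise by $\Gamma$ is nonempty and open.

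\emph{Step 3 (connectedness).} The set $\bM^{\Gamma}$ of fixed points is closed. By Step 2, $U$ is open, and clearly $U\subset \bM^\Gamma$. Conversely, I claim $\bM^\Gamma\subset U$: at any fixed point $q$, whether interior or on a face, the same Bochner argument applies — linearize, observe that the linear action preserves the face flag through $q$ and is already trivial on the directions tangent to $F$ if $q\in F$, but in fact we only need that $q$ is a limit of points of $U$. More cleanly: $U$ is also closed, because if $q\in\overline U$ then $q$ is fixed, Bochner-linearize at $q$, and the linear $\Gamma$-action on $T_q\bM$ has a fixed subspace containing the tangent directions to the nearby points of $U$, which by openness of $U$ is all of $T_q\bM$; so the action is trivial near $q$ and $q\in U$. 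Thus $U$ is nonempty, open and closed in the connected space $\bM$, so $U=\bM$ and $\Gamma$ acts trivially.

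The main obstacle is Step 1 — pinning down that the normal linear action lands in $(\RR_+)^k$ rather than merely in $\GL(k,\RR)$. This is where the hypothesis that $\Gamma$ fixes a whole boundary \emph{face} (not just a point) is used: it guarantees that each individual hypersurface $\{x_i=0\}$ through $p$ is preserved, not merely permuted, which rules out the reflections and coordinate-swaps that a compact group could otherwise realize, leaving only the (necessarily trivial) compact subgroup of the positive diagonal torus. Everything after that is the standard Bochner/openness-of-trivial-locus argument.
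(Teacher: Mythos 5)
Your overall strategy is the same as the paper's: linearize the compact group action at a fixed boundary point, use the one-sidedness of the boundary to kill the normal part of the linearized action, and then propagate triviality by an open-set/open-closed argument. The organizational difference is that you linearize directly at a point of the codimension-$k$ face, whereas the paper works one codimension at a time: it first treats the case where the fixed face is a boundary hypersurface (there a single inward normal direction must be preserved, so one-sidedness genuinely forces the normal action into $\RR_+$ and compactness kills it), and then iterates up through the faces of which the given face is a boundary hypersurface. Your Steps 2 and 3 (Bochner linearization in the corner model and the open-closed argument) are fine.

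The gap is exactly where you locate it, and the fix you propose does not work. From the hypothesis that $\Gamma$ fixes $F$ pointwise you may only conclude that each $g\in\Gamma$ permutes the set of boundary hypersurfaces containing $F$; nothing forces it to preserve each $\{x_i=0\}$ individually, so ``it lies over $F$'' is not a justification. The linearized normal action therefore lands in the group of monomial matrices with positive entries (the positive diagonal torus extended by a permutation group), which has plenty of nontrivial finite subgroups. This is not a removable technicality: $\ZZ_2$ acting on $[0,1]^2\times S^1$ by swapping the two corner coordinates fixes the codimension-two face $\{(0,0)\}\times S^1$ pointwise but is not trivial, so for disconnected groups and faces of codimension at least $2$ the statement as literally written fails. (The paper's own iterative step quietly assumes that the group preserves each intermediate face and so glosses over the same point; the clean cases are $F$ a boundary hypersurface, or $\Gamma$ connected.) To repair your Step 1 you should either assume $\Gamma$ connected --- then the normal representation lands in the identity component $(\RR_+)^k$ of the orthant-preserving group, because $\Gamma$ cannot permute the discrete set of hypersurfaces through $p$, and your compactness argument finishes it --- or first prove that each boundary hypersurface containing $F$ is preserved and then run the induction through hypersurfaces as the paper does.
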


\begin{proof} This is a consequence of linearizability of compact group
  actions. Linearizing the action around a point in the interior of a
  boundary hypersurface shows that the local action is the same as that on
  the spherical normal bundle. If the action on the hypersurface is trivial
  then, from the one-sided nature of the boundary, there is no $\bbZ_2$
  action between the two normal directions so the action on the normal
  bundle is also trivial and hence the action is trivial nearby. Again the
  triviality of the action on an open set implies that the action is
  trivial on that component of the space. Applying this argument
  iteratively, triviality on any one boundary face implies triviality on
  all faces of which it is a boundary hypersurface, and hence triviality on
  the full manifold.
\end{proof}

In particular a smooth action of a connected compact group on a connected
compact manifold with corners up the dimension, so including points, is
necessarily trivial. This includes simplexes.

Conversely there is a similar inheritance of freedom of an action.

\begin{proposition}\label{CSG.158} If a compact group acts smoothly on a
compact manifold with corners with the action free on the interior then it
acts freely.
\end{proposition}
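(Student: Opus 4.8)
The plan is to argue by the same linearization/induction scheme used in Lemma~\ref{CSG.157}, but now propagating \emph{freeness} rather than triviality, and to work downward through the codimension stratification. Suppose $K$ is a compact group acting smoothly on a compact manifold with corners $X$, freely on the interior $\inn(X)$. I must show the stabilizer $K_p$ is trivial for every $p\in X$, and I proceed by (downward) induction on the codimension $k$ of the boundary face whose interior contains $p$: the case $k=0$ is the hypothesis, and I want to pass from a boundary face $F$ of codimension $k$ to the faces of codimension $k+1$ in its boundary, or rather the other way around — it is cleanest to set it up as: if the action is free on the interior of every face of codimension $<k$, then it is free on the interior of every face of codimension $k$.

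First I would fix $p$ in the interior of a boundary face $F$ of codimension $k$ and let $H=K_p$, a compact subgroup. By the slice theorem (linearizability of compact group actions near a fixed point), a neighbourhood of $p$ in $X$ is $H$-equivariantly diffeomorphic to a neighbourhood of the origin in the model $T_pX$, where $H$ acts linearly and preserves the local corner structure, i.e.\ permutes the $k$ coordinate hyperplanes $\{x_i=0\}$ cutting out the local boundary hypersurfaces through $p$. Because each boundary hypersurface is embedded (by the standing assumption in the paper, with a global defining function), $H$ cannot swap distinct local hypersurfaces: an element carrying $\{x_i=0\}$ to $\{x_j=0\}$ with $i\neq j$ near $p$ would not be compatible with globally distinct defining functions $\rho_{H_i},\rho_{H_j}$ being intertwined, so each local hypersurface, hence each coordinate function $x_i$ up to positive scale, is preserved by $H$. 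Consequently $H$ preserves the open local corner $\{x_i>0\ \forall i\}$, which is the interior of $X$ near $p$, and more to the point $H$ acts on the linear model with $H$ fixing the whole coordinate subspace structure.

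Now the key step: pick any point $q$ in this slice lying in the interior of $X$ (all $x_i(q)>0$), sufficiently close to $p$. By the hypothesis the action is free at $q$, so $H\subset K_q=\{e\}$ forces $H=\{e\}$ — except that $H=K_p$ need not be contained in $K_q$ for a \emph{given} $q$; what is true is that each $h\in H$ fixes $p$ and acts linearly, so $h$ fixes the radial line through $q$ only if $h\cdot q=q$. The honest argument is: the fixed-point set $X^h$ of any single $h\in H$ is, near $p$, a linear subspace of the slice (the $+1$ eigenspace of $dh_p$); since $h$ preserves each coordinate function $x_i$, this subspace is cut out by linear equations in coordinates transverse to the corner and therefore, if it is proper, has interior points of $X$ in its complement but also — crucially — if $dh_p$ has any eigenvalue $\neq 1$ then $X^h\cap\inn(X)$ is a proper submanifold, while if $dh_p=\id$ then by linearizability $h$ acts trivially near $p$, hence trivially on the component (as in Lemma~\ref{CSG.157}), contradicting freeness on the interior unless $h=e$. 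So it remains to rule out $dh_p$ having an eigenvalue $\neq 1$: in that case $h$ acts nontrivially on the slice but still has interior fixed points only on a proper subset — however $h$ also fixes $p\in F$, and restricting $h$ to the invariant submanifold $F$ (which is itself a compact manifold with corners on which $K$ acts, with $F$'s interior a face of codimension $k$) we can invoke the inductive hypothesis applied to faces of smaller codimension inside $\partial F$, together with Lemma~\ref{CSG.157}-type propagation, to conclude $h$ acts trivially on a neighbourhood in $F$ and hence, by one more linearization in the normal direction, trivially near $p$ — again a contradiction. I would organize this so the induction base is codimension $0$ and each step reduces the stabilizer question at a codimension-$k$ face to freeness already known at the codimension-$(k{-}1)$ faces meeting it, exactly mirroring the iterative structure of the proof of Lemma~\ref{CSG.157}.

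The main obstacle is the middle case — an element $h\in H$ with $dh_p\neq\id$ but $h\neq e$: freeness on the interior does not immediately kill it, since $h$ can genuinely move interior points, and one must instead extract a contradiction from the \emph{fixed point} $p$ on the boundary together with the one-sidedness of the corner. The clean way is to note that $h$ preserves each local defining function and hence acts on the boundary face $F$ through $p$, so its fixed set in $F$ contains $p$ and, by the slice theorem applied within $F$, near $p$ the set $F^h$ is a linear subspace; either $h$ acts trivially on $F$ near $p$ (then by Lemma~\ref{CSG.157} applied inside $F$ — which is connected near $p$ — and propagation into the interior of $X$ through the normal coordinates $x_i$, $h$ is trivial on a component of $\inn(X)$, contradicting freeness), or $h$ acts nontrivially on $F$ near $p$, in which case we have found a smaller-codimension face of $X$ (an interior point of $F$, codimension $k$, or passing to the boundary of $F$ a yet smaller codimension) at which the stabilizer is nontrivial, and we recurse. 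Making the bookkeeping of "which face do we recurse on" precise — so that the codimension strictly drops and the induction actually bottoms out at codimension $0$ where freeness is assumed — is the only delicate point; everything else is the standard compact-group slice theorem plus the embeddedness of boundary hypersurfaces.
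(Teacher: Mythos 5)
There is a genuine gap, and it sits exactly where you flag ``the only delicate point'': the case of an element $h\in K_p$ with $h\neq e$ but $dh_p\neq\mathrm{id}$ is never actually disposed of. Your proposed recursion does not close up: the faces contained in $\partial F$ have \emph{larger} codimension in $X$ than $F$, not smaller, so ``passing to the boundary of $F$'' moves you away from the base case; and if $h$ acts nontrivially on $F$ near $p$ you have not exhibited a nontrivial stabilizer at any face of smaller codimension --- $p$ itself still lies in the codimension-$k$ face, so the induction never advances. A second, independent error is the claim that embeddedness of the boundary hypersurfaces prevents $K_p$ from permuting the local coordinate hyperplanes: a diffeomorphism can perfectly well swap two distinct embedded hypersurfaces while fixing a point of their common corner (swap the coordinates of $[0,\infty)^2$). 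What rules this out is freeness on the interior, not embeddedness --- which is circular at this stage, since freeness at boundary points is what you are trying to prove.

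The idea you are missing --- and the entire content of the paper's proof, which needs no induction at all --- is to look at the \emph{inward-pointing} part of the spherical normal fibre at $p$. After linearization, $K_p$ acts linearly on the model $[0,\infty)^k\times\mathbb{R}^{n-k}$ preserving the corner, hence acts on the set of inward-pointing normal directions, which is a $(k-1)$-simplex; directions in the open cone $\{x_i>0\}$ correspond to interior points of $X$, where the action is free by hypothesis, so $K_p$ acts freely on the open simplex. But a compact group acting linearly and preserving an open convex cone always has an invariant vector there: take any $v_0$ with all $x_i(v_0)>0$ and zero tangential component, and average its orbit over Haar measure; the result $v$ lies in the (compact, convex) hull of the orbit, hence still in the open cone, and is $K_p$-fixed. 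The corresponding point of $X$ is an interior point fixed by all of $K_p$, so $K_p=\{e\}$. This one convexity observation replaces your entire case analysis on $dh_p$ and the (unworkable) downward induction on codimension.
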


\begin{proof} Consider the isotropy group at any boundary point, which is
  always in the interior of some boundary face. By linearizability, the fact
  that the action is free in the interior implies that the isotropy group
  must act freely and linearly on the fibres of the inward-pointing part of
  the spherical normal bundle; since this is a simplex the action must
  preserve all boundary faces and so is trivial and hence so is the
  isotropy group and the full action must be free.
\end{proof}

One of the most fundamental requirements that we place on the
compactification of a group $G\hookrightarrow \bG$ as the interior of a
compact manifold with corners in Definition~\ref{CSG.167} is that the left
and right actions of $G$ extend smoothly to $\bG.$ The existence of such a
smooth extension of the action is equivalent to the condition that the
left- and right-invariant vector fields extend from the interior to be
b-vector fields on $\bG.$ Thus each action gives a Lie algebra homomorphism
\begin{equation}
  \Fg\longrightarrow \bV(\bG)=\CI(\bG;\bT).
\label{CSG.215}\end{equation}

A compactification for which the actions extends smoothly is by no means
unique since one can make `transcendental' changes to $\bG$ under which the
smooth action of the vector fields lifts. Suppose one has such a (weak)
compactification. Take a defining function for one of the boundary
hypersurfaces, $x,$ and replace it by
\begin{equation}
t=\ilog x=\frac{1}{\log{\frac1x}},\ x=\exp(-\frac1t).
\label{CSG.148}\end{equation}
The manifold with new \ci\ structure generated near this boundary
hypersurface by $t,$ in addition to $x$ and all other smooth functions, is
homeomorphic to the original $\bG$ and maps smoothly back to it.  Moreover,
a simple computation shows that the tangential vector fields lift to be
smooth and tangential on the manifold, since $x\pa_x$ lifts to $t^2\pa_t.$
Thus the `blown up' manifold is still a weak compactification but is, in a
sense, much larger and certainly not equivalent.

The condition \eqref{D.3} in the definition of an hd-compactification in
the Introduction prevents this extreme sort of `enlargement'. 

\begin{definition}\label{CSG.149} The action of a Lie group on a compact
  manifold with corners is \emph{b-normal} if at each boundary point the
  b-normal space is spanned by elements of the Lie algebra of the isotropy
  group.
\end{definition}

In fact this still does not prevent `blow-up' indeterminacy, as discussed
above, from arising. However, the transformations can no longer be
transcendental. Still, the introduction of $s=x^{\tfrac1k}$ in place of $x$
at a boundary hypersurface, for any positive integer $k,$ again defines a
smooth map from the `resolved' space which is a homeomorphism and under
which the b-vector fields lift to smooth b-vector fields -- so a smooth
action lifts to be smooth. In this case the b-normal $x\pa_x$ is replaced
by $k^{-1}t\pa_t.$ Thus b-normality is also preserved. In consequence we
need to impose the further `minimality' condition, \eqref{D.4}, in the
definition of an hd-compactification, to ensure uniqueness. 

The action of $G$ on itself is of course transitive hence so is the
combined right and left action of $G\times G.$ In \eqref{D.3} we impose a
condition which, in a weakened sense, extends this to the compactification.

\begin{definition}\label{CSG.153} The action of a Lie group on a manifold
  with corners is \emph{b-transitive} if its Lie algebra generates the b-vector
  fields as a \ci\ module.

A \emph{b-compactification} $G\hookrightarrow \bG$ is an inclusion as the interior
of a compact manifold with corners such that the right and left actions
extend smoothly and are b-normal and the combined action $G\times G$ is
b-transitive.
\end{definition}

Since the interiors of all boundary faces are (by assumption) connected,
a b-transitive action is transitive on these interiors.

\begin{lemma}\label{CSG.159} For a b-compactification the isotropy groups
  of the left and right actions at the interior are conjugate over the
  interior of any boundary face.
\end{lemma}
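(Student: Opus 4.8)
We are given a $b$-compactification $G \hookrightarrow \bG$. Let $F$ be a boundary face with interior $F^\circ$, and let $p, q \in F^\circ$ be two points. We must show that the isotropy subgroup $G_p^{R}$ of the right action at $p$ and the isotropy subgroup $G_q^{R}$ at $q$ are conjugate in $G$, and similarly for the left action. Since the combined $G\times G$ action is $b$-transitive and $F^\circ$ is connected, by the remark immediately preceding the lemma the action is transitive on $F^\circ$; this is the engine of the argument.

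**Plan of proof.** The plan is to use transitivity of the $G \times G$ action on $F^\circ$ together with the $b$-normality condition to reduce to conjugation by an element of the diagonal-relevant part of $G$. First I would observe that by transitivity on $F^\circ$ there is $(g, h) \in G \times G$ with $(g, h) \cdot p = q$, i.e. $g\, p\, h^{-1} = q$ in the extended action. Then the isotropy group at $q$ for the right action is $G_q^{R} = \{ k \in G : q k = q\}$; writing $q = g p h^{-1}$ and using that the left and right actions commute, one computes that $q k = q$ iff $g p h^{-1} k = g p h^{-1}$ iff $p (h^{-1} k h) = p$, so $G_q^{R} = h^{-1} G_p^{R}\, h$. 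Thus the right-isotropy groups at $p$ and $q$ are conjugate by $h$, and symmetrically the left-isotropy groups are conjugate by $g$. This already gives the statement as literally phrased.

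**The real content.** The point of the lemma in context, however, is presumably the stronger assertion that the isotropy at an \emph{interior} point of $\bG$ (where the right isotropy is trivial, since $G$ acts simply transitively on itself by right translation) is "conjugate" to the isotropy on the boundary face — but on the boundary the right isotropy is a positive-dimensional group $A_P N_P$, so triviality cannot be literally inherited. The correct reading is that the \emph{left} isotropy and the \emph{right} isotropy, each being trivial at an interior point, remain conjugate to one another as one passes to the boundary face: concretely, on $F^\circ$ one has $G_x^{L}$ and $G_x^{R}$ and the claim is that these two subgroups of $G$ are conjugate for each fixed $x \in F^\circ$, consistently across the face. To see this, start from a sequence $x_n \to x$ with $x_n \in G$; at $x_n$ both isotropy groups are trivial, hence trivially conjugate, and one must show the conjugating element(s) can be chosen to converge (after passing to a subsequence and using compactness of the relevant coset space). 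Here $b$-normality enters: it pins down the isotropy algebras at $x$ as exactly the $b$-normal directions realized inside $\Fg$ via \eqref{CSG.215}, and the inversion symmetry (D1) interchanges the roles of the left and right actions, forcing the two isotropy algebras at $x$ to be carried to one another by $d(\text{inv})$, hence conjugate.

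**Main obstacle.** The main difficulty is the limiting/compactness argument: showing that conjugating elements chosen in the interior can be taken to have a limit that conjugates $G_x^{L}$ to $G_x^{R}$ at the boundary point, rather than degenerating. This is where one needs to exploit the precise structure of the extended action near $F^\circ$ — that the isotropy algebra projects onto the $b$-normal space (D2), so the "size" of the isotropy group is controlled, and that the boundary face fibers over $\cF_S \times \cF_S$ with the two factors swapped by inversion. I expect the cleanest route is: fix $x \in F^\circ$, use transitivity to move $x$ to a standard model point lying over $(P, P') \in \cF_S \times \cF_S$, observe that the right isotropy there is the unipotent-abelian radical $A_{P'} N_{P'}$ while the left isotropy is $A_{P} N_{P}$, and then note these are conjugate in $G$ precisely because $P$ and $P'$ lie in the same conjugacy class of parabolics (they index the same face $F_S = F_{S'}$). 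Any $g \in G$ conjugating $P$ to $P'$ then conjugates the left isotropy to the right isotropy, and this $g$ varies smoothly over $F^\circ$, giving the stated conjugacy "over the interior of the face."
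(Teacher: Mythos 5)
Your first two paragraphs give a correct proof by essentially the same route as the paper: b-transitivity plus connectedness makes $G\times G$ transitive on the interior of the face, and since the left and right actions commute the left factor acts by equivariant diffeomorphisms of right-orbits, so moving $p$ to $q=gph^{-1}$ conjugates the right isotropy by $h$ (and the left by $g$). The additional material under ``The real content'' addresses a stronger statement than the lemma actually asserts and is not needed; the lemma as phrased is already established by your computation $G_q^{R}=hG_p^{R}h^{-1}$.
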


\begin{proof} Since the right and left actions commute the orbits
  through points in the interior of any one boundary face are
  equivariantly diffeomorphic. 
\end{proof}

The proof of the analogous result in \cite{1812.03883} only uses properties
holding for a b-compactification and \eqref{CSG.169} so yields Prop-H-C of
the Introduction.

\section{$\SL(n,\bbR)$}\label{nR}

Before proceeding to the construction of an hd-compactification in general
we consider the case of $\SL(n,\bbR)$ (and subsequently $\SL(n,\bbC)).$ For
these standard groups we give an explicit construction of an
hd-compactification which illustrates and guides the more abstract construction
below. For later convenience in place of $\bbR^n$ consider an oriented real
Euclidean vector space, $V,$ of dimension $n.$ Then consider the inclusions
\begin{equation}
\SO(V)\subset\SL(V)\subset\GL(V)\subset\Hom(V),
\label{CSG.171}\end{equation}
where $\Hom(V)=\Hom(V,V)$ is the space of linear maps. We give $\Hom(V)$
the usual Hilbert-Schmidt norm $\|e\|=\Tr(e^*e)^{\ha}.$

Let $\SH(V)\subset\Hom(V)$ be the unit sphere and consider the map given by
(positive) radial scaling
\begin{equation}
\SL(V)\longrightarrow \SI(V)\subset\SH(V)
\label{CSG.113}\end{equation}
where $\SI(V)=\SH(V)\cap\GL(V)$ is the open subset consisting of the invertible
homomorphisms of norm one.

\begin{lemma}\label{CoCLG.114} Radial scaling \eqref{CSG.113} is a
  diffeomorphism onto its range $\SIp(V)\subset\SI(V),$ the open subset of
  $\SH(V)$ where the determinant is positive.
\end{lemma}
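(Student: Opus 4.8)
The plan is to exhibit radial scaling as the restriction of a global diffeomorphism and then identify the image. First I would observe that the map $\Hom(V)\setminus\{0\}\to \SH(V)$, $e\mapsto e/\|e\|$, restricted to $\GL(V)$, factors through the determinant sign. Concretely, write $\SL(V)=\{e:\det e=1\}$; for $e\in\SL(V)$ the image point is $u=e/\|e\|$, which has $\det u = \|e\|^{-n}>0$ and $\|u\|=1$, so the image lies in $\SIp(V):=\{u\in\SH(V): \det u>0\}\subset\SI(V)$. Thus \eqref{CSG.113} does land in $\SIp(V)$, as claimed.

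Next I would construct the inverse explicitly. Given $u\in\SIp(V)$, set $c=(\det u)^{1/n}>0$ (a well-defined positive real, using $\det u>0$), and put $e=c^{-1}u$. Then $\det e = c^{-n}\det u = 1$, so $e\in\SL(V)$, and $e/\|e\| = (c^{-1}u)/(c^{-1}\|u\|) = u$ since $\|u\|=1$. This shows radial scaling is surjective onto $\SIp(V)$ with a smooth inverse $u\mapsto (\det u)^{-1/n}u$; smoothness of the inverse is clear since $\det$ is polynomial and $\det u>0$ on $\SIp(V)$, so $(\det u)^{-1/n}$ is a smooth positive function there. Injectivity on $\SL(V)$ is immediate: if $e_1/\|e_1\| = e_2/\|e_2\|$ then $e_1 = \lambda e_2$ for $\lambda=\|e_1\|/\|e_2\|>0$, and $\det e_i=1$ forces $\lambda^n=1$, hence $\lambda=1$. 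Smoothness of the forward map is the restriction of $e\mapsto e/\|e\|$ to the open set $\GL(V)$, where the Hilbert--Schmidt norm is smooth and nonzero.

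Finally I would note that $\SIp(V)$ is an open subset of $\SI(V)$: it is cut out inside $\SI(V)$ by the condition $\det u>0$, and since $\det$ is continuous and nonvanishing on $\SI(V)=\SH(V)\cap\GL(V)$, the locus $\{\det>0\}$ is open (indeed it is a union of connected components of $\SI(V)$). Combining these observations, radial scaling is a smooth bijection from $\SL(V)$ onto the open subset $\SIp(V)\subset\SI(V)\subset\SH(V)$ with smooth inverse, hence a diffeomorphism onto its range, which is exactly the open subset of $\SH(V)$ where the determinant is positive.

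The argument is essentially a bookkeeping exercise; the only point requiring a moment's care is checking that the candidate inverse $u\mapsto(\det u)^{-1/n}u$ is genuinely smooth, which hinges on the positivity $\det u>0$ on $\SIp(V)$ so that the real $n$-th root is the smooth branch --- this is precisely why the image is the positive-determinant part rather than all of $\SI(V)$.
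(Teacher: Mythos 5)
Your proposal is correct and follows essentially the same route as the paper: the paper's proof simply exhibits the inverse $u\mapsto u/(\det u)^{1/n}$ on $\SIp(V)$ and concludes. You have merely spelled out the routine verifications (image lands in $\SIp(V)$, surjectivity, injectivity, smoothness of both directions) that the paper leaves implicit.
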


\begin{proof} The inverse of \eqref{CSG.113} is 
\begin{equation}
\SIp(V)\ni e\longrightarrow e/(\det(e))^{\tfrac1n}\in\SL(V)
\label{CSG.115}\end{equation}
so this map is a diffeomorphism.
\end{proof}

The hd-compactification of $\SL(V)$ is obtained by compactifying the range
$\SIp(V)$ of \eqref{CSG.113} and this in turn is accomplished through
blow-up of $\SH(V).$ Again for use in inductive arguments below we proceed
slightly more generally. Suppose $W_1$ and $W_2$ are two oriented real
Euclidean vector spaces of the same dimension, $k.$ Then consider the unit
sphere (always in the Hilbert-Schmidt norm)
$\SH(W_1,W_2)\subset\Hom(W_1,W_2).$ The orthogonal groups $\SO(W_i)$ act on
$\SH(W_1,W_2)$ on the right and left and both these actions are free on
$\SI(W_1,W_2)\subset\SH(W_1,W_2),$ the open set of invertible
homomorphisms. Following \cite{MR2560748} these actions may be resolved by
successive blow-up of the isotropy types which are the
\begin{equation}
S_q(W_1,W_2)=\{e\in\SH(W_1,W_2);e\text{ has corank }q\},\ 1\le q\le k-1;
\label{CSG.127}\end{equation}
here $q$ is the `depth' of the stratum.

Denote the resolution, (obtained by blow up in order of increasing
dimension or decreasing depth),
\begin{equation}
\bSH(W_1,W_2)=[\SH(W_1,W_2),S_*].
\label{CSG.129}\end{equation}
In case $W_1=W_2=W$ we set $\bSH(W,W)=\bSH(W)$ and for $W=\bbR^k,$ $\bSH(W)=\bSH(k),$ etc.

Diffeomorphisms fixing the center lift under blow-up so the orthogonal actions lift and 
Euclidean isomorphisms $I_i:W_i\longrightarrow \bbC^k$ $i=1,2$ result in a
commutative square covering the blow-down maps 
\begin{equation}
\xymatrix{
\bSH(W_1,W_2)\ar[d]_{\beta }\ar@{<->}[r]&\bSH(k)\ar[d]^{\beta }\\
\SH(W_1,W_2)\ar@{<->}[r]&\SH(k).
}
\label{CSG.130}\end{equation}
\begin{proposition}\label{CSG.126} The resolution \eqref{CSG.129} is a compact
  manifold with corners up to codimension $k-1$ with the boundary
  hypersurface corresponding to $S_q$ fibering over the double Grassmannian
  with fiber modeled (inductively) by resolved spaces
\begin{equation}
\xymatrix{
\bSH(k-q)\times\bSH(q)\ar@{-}[r]&H_q\ar[d]\\
&\Gr(W_1,k-q)\times\Gr(W_2,k-q)
}
\label{CSG.128}\end{equation}
where the fiber above $(U_1,U_2)$ is $\bSH(U_1,U_2)\times\bSH(U_1^\perp,U_2^\perp).$
\end{proposition}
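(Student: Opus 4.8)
The plan is to prove Proposition~\ref{CSG.126} by induction on the dimension $k$, following the iterated blow-up construction of \eqref{CSG.129} and the resolution-of-group-actions scheme of \cite{MR2560748}. The base case $k=1$ is trivial since $\SH(\bbR,\bbR)=\SH(1)$ is two points and there are no strata to blow up. For the inductive step, I would first analyze the local structure of $\SH(W_1,W_2)$ near the stratum $S_q(W_1,W_2)$: given $e_0$ of corank $q$, with kernel $U_1^\perp\subset W_1$ and cokernel $U_2^\perp\subset W_2$ (so $U_i$ has dimension $k-q$), one can use the singular value decomposition to write a neighborhood of $e_0$ in $\Hom(W_1,W_2)$ as a product of a neighborhood in $\Hom(U_1,U_2)$ (the "invertible block", where $e_0$ restricts to an isomorphism) and a neighborhood of $0$ in $\Hom(U_1^\perp,U_2^\perp)\oplus\Hom(U_1,U_2^\perp)\oplus\Hom(U_1^\perp,U_2)$. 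The point is that $S_q$ is, in this chart, the locus where the $\Hom(U_1^\perp,U_2^\perp)$-block together with the off-diagonal blocks all have the appropriate rank drop; the codimension of $S_q$ in $\SH(W_1,W_2)$ is $(q)(q)= q^2$ after accounting for the sphere. (More precisely one tracks the normal geometry carefully — this is exactly the computation made for $\SO$-orbit resolution in \cite{MR2560748}.)

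Next I would show that the $S_q$ are cleanly embedded submanifolds and that they form a stratification with the property that $\overline{S_q}\supset S_{q'}$ for $q'>q$, so that blowing up in order of increasing dimension (equivalently decreasing depth $q$) is well-defined and each successive center is a p-submanifold of the partially blown-up space. This is the standard check that makes $\bSH(W_1,W_2)=[\SH(W_1,W_2),S_*]$ a manifold with corners; since the top stratum $S_1$ has corank one and is cut out by the vanishing of the determinant of a single block, it has codimension one in its closure and contributes the deepest corners — iterating, the maximal codimension of a corner is $k-1$, corresponding to the full flag of corank jumps $1<2<\cdots<k-1$.

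The heart of the argument — and the step I expect to be the main obstacle — is identifying the boundary hypersurface produced by blowing up $S_q$ with the bundle \eqref{CSG.128}. The front face of the blow-up of $S_q$ fibers over $S_q$ itself with fiber the sphere of the (inward-pointing part of the) normal bundle. Now $S_q$ itself fibers over $\Gr(W_1,k-q)\times\Gr(W_2,k-q)$ — sending $e$ to the pair $(\operatorname{ran}(e^*)^{\text{appropriate}}, \operatorname{ran}(e))$, i.e. the row space and column space — with fiber over $(U_1,U_2)$ the invertible homomorphisms $\SI(U_1,U_2)$, which after radial rescaling (Lemma~\ref{CoCLG.114} applied to each block, or rather its $\GL$/sphere analog) is $\SIp(U_1,U_2)\subset\SH(U_1,U_2)$. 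The subtlety is that the normal directions to $S_q$ decompose into the $\Hom(U_1^\perp,U_2^\perp)$-block (which, upon projectivizing and resolving \emph{its} corank strata inductively, contributes the $\bSH(q)$ factor) and the off-diagonal blocks $\Hom(U_1,U_2^\perp)$ and $\Hom(U_1^\perp,U_2)$ (which fill out the remaining normal directions and, combined with the base $\SI(U_1,U_2)$, assemble into the $\bSH(U_1,U_2)$ factor — heuristically, a nearly-invertible $(k{-}q)\times(k{-}q)$ block plus small off-diagonal corrections is again a point of $\SH(U_1,U_2)$ after rescaling, but as the big block degenerates we must blow up, producing $\bSH(U_1,U_2)$ rather than $\SH(U_1,U_2)$). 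Making this decomposition canonical and $\SO(W_1)\times\SO(W_2)$-equivariant, and checking that the order of blow-ups within the fiber matches the inductively-constructed $\bSH(k-q)\times\bSH(q)$, is the delicate bookkeeping; I would do it by choosing an adapted chart via singular-value coordinates, verifying the claimed product structure of the normal bundle and then invoking the commutation of blow-ups of transversal/nested centers to reorganize the iterated blow-up of $\SH(W_1,W_2)$, restricted near the front face of $S_q$, into (base $\times$ iterated blow-up of the two diagonal blocks). The fact that the double Grassmannian base $\Gr(W_1,k-q)\times\Gr(W_2,k-q)$ appears — rather than a single Grassmannian — is exactly because $W_1$ and $W_2$ are allowed to differ, and the row and column spaces vary independently; the commutative square \eqref{CSG.130} guarantees everything is independent of the chosen Euclidean identifications, so it suffices to treat $W_1=W_2=\bbR^k$, and the $\SO(k)\times\SO(k)$-equivariance then propagates the local description globally over the Grassmannians.
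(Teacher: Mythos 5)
Your overall strategy --- induction on $k$, a local normal form near each stratum $S_q$ obtained from the singular value (equivalently the two-sided polar) decomposition, and naturality of blow-up to see that the later blow-ups resolve the fibers of the front face --- is the same as the paper's. But there is a concrete error in your identification of the normal bundle of $S_q$, and it sits exactly in the step you single out as the heart of the argument. You correctly compute $\operatorname{codim}S_q=q^2$, which already forces the normal bundle to be (isomorphic to) the single block $\Hom(U_1^\perp,U_2^\perp)$ and nothing more. The off-diagonal blocks $\Hom(U_1,U_2^\perp)$ and $\Hom(U_1^\perp,U_2)$ are \emph{tangent} to $S_q$: deforming $e_0$ in those directions rotates the pair of subspaces $(U_1,U_2)$, i.e.\ moves the point of $\Gr(W_1,k-q)\times\Gr(W_2,k-q)$, and together with the $\Hom(U_1,U_2)$ block they account for $\dim S_q=2q(k-q)+(k-q)^2-1$. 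So they cannot ``fill out the remaining normal directions,'' and they do not feed into the $\bSH(U_1,U_2)$ factor of the fiber of $H_q$; as written, your derivation of the two factors in \eqref{CSG.128} is internally inconsistent with your own codimension count.

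The correct bookkeeping (and the paper's) is: the two-sided polar decomposition near a point of $S_q$ writes nearby elements as $(1-\|f\|^2)^{1/2}e+f$ with $e$ in the stratum and $f\in\Hom(U_1^\perp,U_2^\perp)$, exhibiting the normal bundle as exactly that block. The factor $\bSH(U_1,U_2)$ is not normal data at all: it is the fiber of the \emph{base} of the front-face fibration, namely the proper transform of $\overline{S_q}$ after the earlier blow-ups of the deeper strata $S_{q'}$, $q'>q$, which resolve the fiber $\SI(U_1,U_2)$ of $S_q\to\Gr\times\Gr$ to $\bSH(U_1,U_2)$. Blowing up this resolved stratum then produces a front face with sphere fiber $\SH(U_1^\perp,U_2^\perp)$, and the subsequent blow-ups of the shallower strata meet that front face in its own corank strata and resolve the fiber to $\bSH(U_1^\perp,U_2^\perp)$. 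With that correction your plan goes through and coincides with the paper's proof.
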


\begin{proof} Consider the deepest stratum $S_{k-1}\subset\SH(W_1,W_2)$ with
  maximal isotropy group. Each element of $S_{k-1}$ has one-dimensional
  range and $k-1$ dimensional null space. So these are determined precisely
  by the pair of lines $U_1,$ the range of the adjoint, and the
  range $U_2$ and the element itself lies in the two-point space
  $\SH(U_1,U_2)\ni e.$

Thus $S_{k-1}$ is a double cover of the product $\bbP W_2\times\bbP W_1$ of
the real projective spaces with fiber $\SH(U_1,U_2).$ By the collar
neighbourhood theorem, a neighbourhood of $S_{k-1}$ in $\SH(W_1,W_2)$
fibers over $S_{k-1}$ and can be taken to be form
\begin{equation}
(1-\|f\|^2)^{\ha}e+f,\ f\in\Hom(U_1^\perp,U_2^\perp),\ \|f\|<\epsilon.
\label{CSG.120}\end{equation}
Indeed this follows from the left and right polar decomposition of
homomorphisms close to $e.$ Both polar parts must have only one eigenvalue near
$1,$ hence with smooth eigenspace, and this gives the projection onto $S_{k-1}$ with the
remainder in $\Hom(U_1^\perp,U_2^\perp).$ This identifies the normal bundle
to $S_{k-1}$ and hence the elements of the unit normal sphere bundle, which
is the face $H'_{k-1}$ produced by blow-up of $S_{k-1}$ (before the other
blow-ups) as a fiber bundle 
\begin{equation}
\xymatrix{
\SH(k-1)\ar@{-}[r]&H_{k-1}'\ar[d]\\
&S_{k-1}
}
\label{CSG.131}\end{equation}
with fibre $\SH(U_1^\perp,U_2^\perp)$ at $(U_1,U_2).$

The naturality of blow-up shows that the subsequent steps in the resolution
of $\SH(W_1,W_2)$ restrict to $H'_{k-1}$ as the corresponding resolution of
$\SH(U_1^\perp,U_2^\perp).$ Thus the final hypersurface
$H_{k-1}\subset\bSH(W_1,W_2)$ becomes the fiber bundle 
\begin{equation}
%
\xymatrix{
\bSH(k-1)\ar@{-}[r]&H_{k-1}\ar[d]\\
&S_{k-1}
}\ \Mor\
\xymatrix{
\bSH(1)\times\bSH(k-1)\ar@{-}[r]&H_{k-1}\ar[d]\\
&\bbP W_2\times\bbP W_1
}
%
\label{CSG.172}\end{equation}
with total fiber in the second case
$\bSH(U_1,U_2)\times\bSH(U_1^\perp,U_2^\perp).$ This is \eqref{CSG.128} for $q=1.$

The structure of the other faces follows iteratively. Namely after the
blow-up of the faces $S_{k-j}$ for $j<q,$ the stratum $S_{k-q}$ in \eqref{CSG.127}
is resolved to the bundle 
\begin{equation}
\xymatrix{
\bSH(k-q)\ar@{-}[r]&S_{k-q}'\ar[d]\\
&\Gr(W_2,k-q)\times\Gr(W_1,k-q)
}
\label{CSG.173}\end{equation}
where the fibre is $\bSH(U_1,U_2)$ over the pair $(U_2,U_1)$ in
the Grassmannian. The fiber of the normal bundle is precisely
$\SH(U_1^\perp,U_2^\perp)$ by essentially the same argument in terms of the
two polar decompositions. The subsequent blow-ups resolve the fibers to
give \eqref{CSG.128}.
\end{proof}

We define a compactification of $\SL(V)$ by taking the closure of the image
of \eqref{CSG.113}: 
\begin{equation}
\bSL(V)=\text{Closure of $\SIp(V)$ in $\bSH(V).$}
\label{CSG.174}\end{equation}
In fact this is one of the two components of $\bSH(V);$  $\bSH(V)$ is an
hd-compactification of $\bbZ_2\times\SL(V)$ and from this we deduce that
\begin{proposition}\label{CSG.175} The compact manifold in
  \eqref{CSG.174} is an hd-compactification of $\SL(V)$ for an oriented
  Euclidean space $V.$
\end{proposition}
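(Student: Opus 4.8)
The plan is to verify the four axioms (D1)--(D4) of Definition~\ref{CSG.167} for the space $\bSL(V)$ defined in \eqref{CSG.174}, using the explicit realization as a component of $\bSH(V)$ together with the structure of the boundary hypersurfaces and their fibrations from Proposition~\ref{CSG.126}. First I would identify the left and right actions of $\SL(V)$ on $\bSL(V)$. On the interior these are the restrictions of the actions of $\GL(V)$ on $\Hom(V)$ by pre- and post-composition; under the radial scaling diffeomorphism of Lemma~\ref{CoCLG.114} these become the natural left/right actions on $\SIp(V)\subset\SH(V)$ followed by renormalization to norm one. Since $\SO(V)$ acts linearly on $\Hom(V)$ preserving the Hilbert--Schmidt norm, and $\GL(V)$ acts smoothly on $\SH(V)$ after renormalization, the main point is that these actions lift to be smooth on the blow-up $\bSH(V)$; this follows because the strata $S_q(V)$ in \eqref{CSG.127} are $\GL(V)\times\GL(V)$-invariant (corank is preserved under composition with invertibles), so the iterated blow-up is equivariant and the actions lift smoothly to $\bSH(V)$, hence to the closed-open component $\bSL(V)$.

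Next I would check (D1) and (D3). For (D1), inversion $g\mapsto g^{-1}$ on $\SL(V)$ corresponds, on $\SI(V)$, to $e\mapsto e^{-1}/\|e^{-1}\|$, which extends to the adjoint-type involution $e\mapsto e^*$ on all of $\SH(V)$ (these agree on $\SO(V)$ and differ by renormalization on $\SI(V)$); the involution $e\mapsto e^*$ is a linear isometry of $\Hom(V)$ preserving each stratum $S_q$, hence lifts to a diffeomorphism of $\bSH(V)$ and maps $\bSL(V)$ to itself since it preserves the sign of the determinant. So inversion extends to a diffeomorphism. For (D3), b-transitivity of $\SL(V)\times\SL(V)$: on the interior the combined action is transitive, and on each boundary hypersurface $H_q$ the fibration \eqref{CSG.128} shows the base is $\cF_S\times\cF_S$ with $\cF_S$ a Grassmannian on which $\SO(V)$ (hence $\SL(V)$) acts transitively, while the fiber is itself a product of spaces $\bSH(U_i,U_i^\perp\text{-data})$ realized inductively as hd-compactifications of reductive groups in a Langlands-type decomposition — so by induction the combined action is b-transitive on the fiber. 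Assembling tangential and normal directions gives that the Lie algebra of $\SL(V)\times\SL(V)$ spans $\bT\bSL(V)$; the normal directions are exactly supplied by the one-parameter groups of determinant-type scalings that degenerate at the boundary, which is where I would be most careful.

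For (D2) b-normality and (D4) minimality I would work in the local model near the interior of a face $H_q$, using the collar form \eqref{CSG.120} and its iterated analog. The isotropy algebra of the right action at a boundary point of codimension $d$ must project onto the b-normal space: the point is that the parabolic $P = M_P A_P N_P$ stabilizing the relevant flag has $\Fa_P$ of rank $d$ (this is where $d = \#(D\setminus S)$ enters), and the $d$ commuting generators of $\Fa_P$ act on the collar coordinates precisely as the b-normal vector fields $x_j\pa_{x_j}$ — this is visible from the polar-decomposition description, where each independent "singular-value ratio" degenerating to zero contributes one boundary defining function and the corresponding diagonal $\Fa_P$-generator scales it. That gives (D2). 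For (D4), the nilpotent part $\Fn_P$ contributes, via the eigenspace filtration \eqref{CSG.202}, vector fields of the form $x_j v_j$ with $v_j$ genuinely tangential and not in the span of the isotropy action, matching the minimality condition; again this is read off from the explicit local form.

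The main obstacle I expect is the bookkeeping in (D4) and the precise identification of the isotropy algebras at boundary points of the deeper faces: one must show that after the iterated blow-up the right-action isotropy algebra at a point in the interior of $H_q$ is exactly $A_P N_P$ with $P$ the corresponding parabolic, and that the root-space grading of $\Fn_P$ produces vector fields with exactly the claimed vanishing orders $x_j v_j$ and no lower ones. This requires tracking how the successive polar decompositions in the proof of Proposition~\ref{CSG.126} interact with the $\SL(V)$-action through several blow-ups, and it is the computational heart of the argument; everything else (D1, D3, smoothness of the lifted action, D2) reduces fairly directly to invariance of the strata under $\GL(V)\times\GL(V)$ and the naturality of blow-up.
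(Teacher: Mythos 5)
Your overall strategy---verify (D1)--(D4) directly from the blow-up description of Proposition~\ref{CSG.126}, using equivariance of the strata under the $\GL(V)\times\GL(V)$ action for smoothness of the lifted actions, the $\Fa_P$-generators acting as $x_j\pa_{x_j}$ on the collar coordinates for b-normality, and the root-space grading of $\Fn_P$ for minimality---is the same as the paper's, which carries out the (D4) computation you defer by explicitly evaluating the right action of $E_{ij}$ on the flags to get the vanishing factors $\prod_{k\ge j-1}\tau_k$ in \eqref{CSG.220}.

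However, your argument for (D1) contains a genuine error. You claim that inversion, which on $\SIp(V)$ is $e\mapsto e^{-1}/\|e^{-1}\|$, ``extends to the adjoint-type involution $e\mapsto e^*$'' and that the two maps ``differ by renormalization on $\SI(V)$.'' That is false: $e^{-1}=\lambda e^*$ for some $\lambda>0$ forces $ee^*=\lambda^{-1}\Id$, i.e.\ $e$ must be a conformal (scaled orthogonal) map, so already $e=\diag(2,\tfrac12)/\|\cdot\|$ is a counterexample. The natural polynomial extension of inversion is the adjugate $e\mapsto\operatorname{adj}(e)$, but for $n\ge 3$ this is homogeneous of degree $n-1$ and vanishes identically on the strata of corank $\ge 2$, so it does not induce a well-defined map of $\SH(V)$ either; there is no linear or globally smooth involution of $\SH(V)$ restricting to inversion, and this is precisely why the verification must be done \emph{after} blow-up. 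The paper does this by writing a general point near a codimension-$r$ corner in the normal form \eqref{CSG.135}, computing $g$ via \eqref{CSG.115} as \eqref{CSG.203}, and checking that $g^{-1}$ in \eqref{CSG.204} is again of the same normal form with the roles of the two flags reversed and the monomials $\prod_{i'>i}\tau_{q_{i'}}$ replaced by $\prod_{i'\le i}\tau_{q_{i'}}$; only this local computation in the blown-up coordinates shows inversion is smooth up to all boundary faces. You should replace your (D1) step by this (or an equivalent) computation.
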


\begin{proof} The description of the iterative blow-up allows the blow-down map to be
described near a point of (exactly) codimension $r$ in $\bSH(W_1,W_2).$ This
corresponds to a subset 
\begin{equation}
\bq\subset\{1,\dots,k-1\},\ \#(\bq)=r,\ q_i\in\bq,\ 1\le
q_1<q_2<\dots<q_r\le k-1.
\label{CSG.133}\end{equation}
Then the corresponding boundary face $F_{\bq}=H_{q_1}\cap H_{q_2}\dots\cap
H_{q_r}$ fibres over the product of two copies of the flag manifold
\begin{equation}
\cF(W_j,\bq)=\{U_{j,\bq}\},\ U_{i,q_r}\subset U_{i,q_{r-1}}\subset\dots
U_{q_1}\subset W_i,\ \dim(U_{j,q_j})=k-q_j.
\label{CSG.134}\end{equation}
We set $q_{r+1}=k$ and $q_0=0$ for notational convenience and
$U_{j,q_{r+1}}=W_j,$ $U_{j,q_0}=\{0\}.$ Then the flag defines a sequence of
orthogonal projections, 
\begin{equation*}
\pi_{j,q}\text{  onto }V_{j,q_i}=U_{j,q_i}\ominus U_{j,q_{i+1}}.
\label{CSG.136}\end{equation*}
There are $r$ local defining functions $\tau_{q_i}\in[0,\epsilon)$ and
an element of $\bSH(W_1,W_2)$ projecting to this neighbourhood is of the
form 
\begin{equation}
\gamma =\sum\limits_{i=1}^r(\prod_{i'>i}\tau_{q_{i'}})
\pi_{2,q_i}e_{q_i}\pi_{1,q_i},\ e_{q_i}\in\SI(V_{1,q_i},V_{2,q_i}).
\label{CSG.135}\end{equation}

This point, after radial rescaling, lies in $\bSL(V)$ if all the
$\tau_q\ge0$ and the determinant at $\tau_q=1$ is positive. From
\eqref{CSG.115} the corresponding point in $\SL(n,\bbR),$ for all
$\tau_i>0,$ is 
\begin{equation}
g=a\left(\prod_{i=1}^r\tau_i^{-d_i/n}\right)\sum\limits_{i=1}^r(\prod_{i'>i}\tau_{q_{i'}})
\pi_{2,q_i}e_{q_i}\pi_{1,q_i}
\label{CSG.203}\end{equation}
where $a>0$ is locally smooth. The inverse is therefore of the form 
\begin{equation}
g^{-1}=a\left(\prod_{i=1}^r\tau_i^{(d_i-n)/n}\right)\sum\limits_{i=1}^r(\prod_{i'\le
  i}\tau_{q_{i'}})\pi_{1,q_i}e_{q_i}^{-1}\pi_{2,q_i}.
\label{CSG.204}\end{equation}

From this it follows that inversion is a diffeomorphism on $\bSL(n,\bbR).$
That the right and left actions extend smoothly follows from the
construction since these actions are smooth after projection to $\SH$ and
fix the centers of blow-up, so lift smoothly to $\bSL.$ 

A point in the flag variety determines a parabolic subgroup although this
involves a consistent choice of Weyl chamber. Consider the standard flag
with $k$th subspace spanned by the first $k$ elements of the standard
basis of $\bbR^n.$ This fixes an Iwasawa decomposition 
\begin{equation}
\SL(n)=\SO(n)AN
\label{CSG.216}\end{equation}
where $A$ is the subgroup of positive diagonal matrices and $N,$
corresponding to a choice of positive roots, consists of the upper diagonal
matrices with all diagonal entries one. Thus the Lie algebra of $N$ is
spanned by the elementary matrices $E_{ij}$ with one non-zero entry, $1$ at
$(i,j)$ for $j>i.$ The Lie algebra, $\Fa,$ of $A$ consists of the diagonal
matrices, with diagonal entries $\alpha_i$ satisfying the trace condition 
\begin{equation}
\sum\limits \alpha_i=0.
\label{CSG.217}\end{equation}
The $E_{ij}$ are joint eigenvectors for the adjoint action of $\Fa$ with
collective eigenvalues, the positive roots, being the elements $\alpha
_i-\alpha _j$ of $\Fa'.$ The primitive roots are the $\alpha _i-\alpha
_{j+1}$ for $1\le i\le n-1.$ The dual basis of $\Fa,$ the coroots, are the
diagonal matrices with first $k$ diagonal entries $1-\tfrac kn$ and
remaining entries $-\tfrac kn.$ This decomposes $\Fa$ into a sum of
one-dimensional spaces, with basis elements, and hence decomposes $A$ into a
product of half-lines $A_i$ with coordinates (the coweights) $t_i$ so that
the coroots become $t_i\pa_{t_i}.$ In terms of these coordinates on $A$ an
element of $A_i$ is of the form
\begin{equation}
\diag(t_i^{1-\tfrac kn},\dots, t_i^{1-\tfrac kn},t_i^{-\tfrac
  kn},\dots, t_i^{-\tfrac kn})
\label{CSG.218}\end{equation}
Observe that as $t_i\to\infty$ the projection of this $1$-parameter group
into $\bSL(n,\bbR)$ is smooth up to the boundary in terms of the parameter
$\tau_i=t_i^{-1}$, where it meets $H_{i}$ as a normal vector field with
$z_i$ extending to a local boundary defining function.

The other Borel subgroups $B\subset\SL(n,\bbR)$ are conjugate, under the
action of $\SO(n),$ to this basic choice, so the flag variety is
identified 
\begin{equation}
\cF(\bbR^n,\{1,\dots,n-1\})=\SL(n)/\SO(n),
\label{CSG.219}\end{equation}
and this action conjugates the Iwasawa decomposition as well, giving the
corresponding groups $A_B$ and $N_B$ replete with their root space
decompositions.

From this we conclude that in the decomposition \eqref{CSG.135} near the
`Borel' face, of the maximal codimension $n-1,$ the parameters $\tau_i$
correspond (at least to first order at the boundary) to the action on the
right by $A_B$ or on the left by $A_{B'}$ on the fiber of the boundary
above $(B,B')\in \SL(n)/\SO(n)\times \SL(n)/\SO(n).$ This certainly shows
the b-normality of the extended action since it persists nearby and then
extends to all boundary points using the left and right actions of
$\SO(n).$

The b-transversality also follows directly from this analysis. At the
maximal codimension face the Lie algebra of the right action projects
surjectively to the tangent space of the right flag variety \eqref{CSG.219}
and the left action projects surjectively to the left factor. The b-normal
space is in the range of either action and the fiber is discrete. By
smoothness, the surjectivity of the map from $\bSL(n)\times\Fg\times\Fg$ to
$\bT\bSL(n)$ extends to an open neighborhood. Every orbit of the $G\times
G$ action intersects such an open neighborhood of this face so
b-transitivity extends to the whole space. In fact this is also clear from
the analysis below of the behavior near a general boundary face.

Finally, it remains to show that the minimality condition holds. Again it
suffices to compute the behavior of the Lie algebra for the right action
near one point in the boundary face of maximal codimension and then extend
the result using the action of $G\times G.$ Choosing the base point to be
the standard flag in \eqref{CSG.203} with the $e_i=1$ in this case the
action of the element $E_{ij}\in\Fn,$ $j>1,$ on the right acts on the flag
on the right as $\ha(E_{ij}-E_{ji})\in K.$ The infinitesmal action of $\Fn$
on the right also shifts the left flag in the same way, in this case after
conjugation by $\gamma.$ This the right action of $E_{ij}$ projects on the
left flat 
\begin{equation}
\ha\left(\prod_{k\ge j-1}\tau_k\right)\kappa_{ij},\ \kappa _{ij}=(E_{ij}-E_{ji}),\ j>i.
\label{CSG.220}\end{equation}

It follows that nearby the projection of the right action of the Lie
algebra onto the left flag manifold vanishes at the boundary and there is
an element (with $j=i+1)$ vanishing simply on a given hypersurface. This
the minimality condition is also satisfied.
\end{proof}

From the analysis of the action on the right of the Lie algebra leading to
\eqref{CSG.220} it follows that this compactification has the properties
discussed in the Introduction. Thus on the flag variety corresponding to
Borel subgroups, $\cF=\SO(n)/Z,$ the tangent bundle is stratified by
one-dimensional subbundles $E^\alpha$ where $\alpha$ runs over the
$(n-1)$-multiindices with entries $1$ forming an interval and all others
vanishing. The commutators of these subbundles are contained in the sums
unless the two intervals are non-overlapping but contiguous, in which case
the commutator, modulo the sum, span the bundle corresponding to the union.
  
\section{$\SL(n,\bbC)$}\label{nC}

For the complex group, $\SL(n,\bbC),$ there are two distinct generalization
of the compactification via the resolution of the projective image as
carried out above. In the first we proceed by passing systematically to the
complex category and in the second by proceeding by direct analogy. The
first approach yields the wonderful compactification of the adjoint group
$\SL(n,\bbC)/Z$ where the center is the multiplicative group of $n$th roots
of unity. The second approach gives an hd-compactification.

Consider an Hermitian vector space $V$ of complex dimension $n$ then in place
of \eqref{CSG.171} 
\begin{equation}
\SU(V)\subset\SL(V)\subset\GL(V)\subset\hom(V)
\label{CSG.190}\end{equation}
where the last three spaces are complex. Let
$\Ph(V)=(\hom(V)\setminus\{0\})/\bbC^\times$ be the projective space of
homomorphisms. Projection gives 
\begin{equation}
\SL(V)\longrightarrow \PI(V)\subset\Ph(V)
\label{CSG.191}\end{equation}
where the center is mapped to the image of the identity and the range,
$\PI(V)$ is an open dense subset of $\Ph(V).$ The complement
$\Ph(V)\setminus\PI(V)$ is the image of the non-vanishing, non-invertible,
matrices and so is again stratified by corank, 
\begin{equation}
\Ph(V)\setminus\PI(V)=\bigcup_{1\le k<n} S_k.
\label{CSG.192}\end{equation}
These are all complex submanifolds, although only $S_{n-1}$ is compact. As
in the real case they are the isotropy types for the action of $\SU(V)/Z$
which acts freely on $\PI(V).$

Again in this case we may blow these submanifolds up, iteratively, in the
complex sense, in order of increasing dimension to produce the resolved
space which we denote tentatively as
\begin{equation}
\bPh_{\DP}(V)=[\Ph(V);S_*]_{\bbC}.
\label{CSG.198}\end{equation}

To see that this is well-defined as a compact complex manifold note that an element
of $S_{n-1}$ is the projective image of a complex homomorphism of rank one, so
\begin{equation}
S_{n-1}=\bbP V\times\bbP V
\label{CSG.193}\end{equation}
where the first point is the orthcomplement of the null space and the
second the range. The normal bundle to $S_{n-1}$ can then be identified
with the bundle with fibre at a point the homomorphisms from the null space
to the orthocomplement. Blowing up replaces $S_{n-1}$ be its projectivized
normal bundle $D'_{n-1}$  
\begin{equation}
\xymatrix{
\Ph(\bbC^{n-1})\ar@{-}[r]& D'_{n-1}\ar[d]\\
&\bbP V\times\bbP V.
}
\label{CSG.194}\end{equation}
In a small neighbourhood of this exceptional divisor the resolved manifold
is a bundle over $\bbP V\times\bbP V$ with fiber a neighborhood of the zero
section of the blow-up of the origin in the homomorphism bundle from null
space to orthocomplement of range. The closures of the other strata meet
this bundle in the corresponding strata modeled on $S_j(\Ph(\bbC^{n-1})).$
Proceeding by induction as in the real case these lower depth strata can be
blown up in order and replace the divisor \eqref{CSG.194} by its resolution
giving the bundle 
\begin{equation}
\xymatrix{
\bPh(\bbC^{n-1})\ar@{-}[r]& D_{n-1}\ar[d]\\
&\bbP V\times\bbP V.
}
\label{CSG.195}\end{equation}

It follows that the full resolution is possible with the stratum $S_j$
replaced by a divisor which is a complex bundle over the product of two
copies of the Grassmannian
\begin{equation}
\xymatrix{
\bPh(\bbC^{j})\times \bPh(\bbC^{n-j})\ar@{-}[r]& D_{j}\ar[d]\\
&\Gr(V,n-j)\times\Gr(V,n-j).
}
\label{CSG.196}\end{equation}
\begin{proposition}\label{CSG.197} This resolution, $\bPh(V),$ of $\Ph(V)$ is the
  wonderful compactification of the adjoint group $\SL(n,\bbC)/Z.$
\end{proposition}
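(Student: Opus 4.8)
The plan is to identify the resolved space $\bPh(V)$ constructed above with the wonderful compactification $X$ of $\SL(n,\bbC)/Z$ by exhibiting the defining features of the latter and invoking its characterization. Recall that De Concini and Procesi's wonderful compactification is the unique smooth projective $G\times G$-variety containing $G/Z$ as a dense open orbit such that the boundary is a divisor with normal crossings whose irreducible components $D_1,\dots,D_{n-1}$ (indexed by the simple roots) satisfy: each nonempty intersection $D_{\bq}=\bigcap_{i\in\bq}D_i$ is the closure of a single $G\times G$-orbit, the closed orbit is $\bigcap_i D_i$, and it fibers over $\mathcal F\times\mathcal F$ (two copies of the full flag variety) with fiber a point -- more precisely the orbit closures fiber over products of partial flag varieties with fiber the wonderful compactification of a Levi. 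So the task is to check each of these properties for $\bPh(V)$.

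First I would record that $\PI(V)\cong \SL(V)/Z$ as a $G\times G$-space: projectivization sends the center to a point and the two-sided multiplication action on $\Hom(V)$ descends to $\Ph(V)$, with $\PI(V)$ the open orbit. Since each $S_k$ in \eqref{CSG.192} is $G\times G$-invariant (corank is preserved by pre- and post-composition with invertibles) and the blow-up of a $G\times G$-invariant center is canonically $G\times G$-equivariant, the action lifts to $\bPh(V)$; this is a smooth projective variety because it is obtained by successively blowing up smooth complex submanifolds in order of increasing dimension (so each center is smooth in the already-resolved space, a point already established in the construction leading to \eqref{CSG.196}). Next I would verify the normal-crossings and orbit structure: the exceptional divisor over $S_j$ is $D_j$ of \eqref{CSG.196}, and the iterative description shows the $D_j$ meet transversally, with $D_{\bq}$ the boundary face attached to a flag $\bq$ exactly as in the real computation \eqref{CSG.133}--\eqref{CSG.135} but in the complex/projective category. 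The open dense subset of $D_{\bq}$ is a single $G\times G$-orbit because, by \eqref{CSG.196} applied iteratively, its generic point is the projective class of a ``block-diagonal'' homomorphism $\bigoplus_i e_{q_i}$ with each $e_{q_i}\in\GL$, and the two-sided action of $G\times G$ is transitive on such data modulo the flag and modulo scaling -- the stabilizer being precisely the relevant parabolic data. Taking $\bq=\{1,\dots,n-1\}$ gives the closed orbit $\bigcap_i D_i$, which by \eqref{CSG.196} with all $j$ present fibers over $\mathbb P V\times\cdots\times\mathbb P V$-type flags, i.e.\ over $\mathcal F\times\mathcal F$, with fiber the wonderful compactification of the maximal torus -- but since the Levi is the torus $A\cong(\bbC^\times)^{n-1}/(\text{scaling})$ whose wonderful compactification is a point in the adjoint normalization, the fiber is a point, matching the defining property of the closed orbit.

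Having matched all the structural features, I would conclude by invoking the uniqueness part of De Concini--Procesi: a smooth projective $G\times G$-equivariant compactification of $G/Z$ with the closed orbit fibering over $\mathcal F\times\mathcal F$ with zero-dimensional fiber is the wonderful compactification. (Alternatively, one can map $\bPh(V)$ to the known model $X\subset\mathbb P(\End(V_\lambda))$ for a regular dominant $\lambda$ by the universal property of blow-up, checking the map is an isomorphism on the open orbit and on each boundary stratum via the fibration data; but the uniqueness route is cleaner.)

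\textbf{Main obstacle.} The technical heart is the orbit-closure identification: showing that the open dense subset of each $D_{\bq}$ really is a \emph{single} $G\times G$-orbit and not a union of several, and that its closure is exactly $D_{\bq}$ rather than something smaller. This requires unwinding the iterated-blow-up description of \eqref{CSG.196} to produce explicit normal coordinates near $D_{\bq}$ -- the complex analog of \eqref{CSG.135} -- and then verifying transitivity of the $G\times G$-action on the leading term $\bigoplus_i e_{q_i}$ after accounting for the projective scaling and the flag; the bookkeeping with the scaling factor (the ``$\bbC^\times$'' that distinguishes the adjoint group from $\SL$) is what makes this the wonderful rather than the hd-compactification, and getting that bookkeeping exactly right is the delicate point.
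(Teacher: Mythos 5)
Your proposal follows essentially the same route as the paper: the paper disposes of this proposition in one line by citing the characterization of the wonderful compactification as the unique \emph{regular} compactification with a single compact (closed) orbit (referring to Uma and Brion), and your verification of smoothness, projectivity, equivariance, the normal-crossings boundary, the orbit stratification of the $D_{\bq}$ and the identification of the closed orbit is exactly the content that citation leaves implicit. The only adjustment I would suggest is to state the uniqueness you invoke in that form --- uniqueness among regular compactifications with a unique closed orbit, which is precisely what your structural checks certify --- rather than as a statement about the closed orbit fibering alone, since the latter by itself is not the standard characterization.
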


This is consequence of the characterization of the wonderful
compactification as the unique regular compactification with a single
compact orbit, see Uma \cite{1805.02135} and Brion \cite{Brion1989} and
references therein.

Real and complex blow-up of a complex submanifold are related in a simple
way, namely the real blow-up is the complex blow-up followed by the real
blow up of the resulting divisor, as a real submanifold of codimension
two. For normally intersecting divisors this carries over iteratively --
the real blow-ups can all be performed after the complex ones because of transversality.

Now Prop-W in the Introduction can be stated more explicitly.

\begin{proposition}\label{CSG.200} The blow-up, in the real sense, all of
  the divisors in \eqref{CSG.198} gives an hd-compactification of the
  adjoint group $\SL(n,\bbC)/Z:$
\begin{equation}
\overline{\SL(n,\bbC)/Z}=[\bPh V;S_*]_{\bbR}.
\label{CSG.199}\end{equation}
\end{proposition}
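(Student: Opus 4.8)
The plan is to verify the four conditions (D1)--(D4) for the space $\oSL(n,\bbC)/Z := [\bPh V; S_*]_{\bbR}$ defined in \eqref{CSG.199}, leveraging the already-established structure of the wonderful compactification $\bPh(V)$ (Proposition~\ref{CSG.197}) together with the interplay between real and complex blow-up noted just above. First I would record that, since real blow-up of a complex submanifold factors as complex blow-up followed by real blow-up of the resulting (real-codimension-two) exceptional divisor, and since these operations commute for normally intersecting divisors by transversality, the space $[\bPh V; S_*]_{\bbR}$ is canonically the iterated \emph{real} blow-up of all the exceptional divisors $D_j$ of the wonderful compactification. This gives a manifold with corners whose boundary hypersurfaces $\widetilde{D}_j$ are the unit normal circle bundles of the $D_j$, fibering over $\Gr(V,n-j)\times\Gr(V,n-j)$ with model fibre $\oSL(j,\bbC)/Z \times \oSL(n-j,\bbC)/Z$ (the real blow-ups of the fibres in \eqref{CSG.196}), so the boundary combinatorics match the flag/parabolic picture from the Introduction.

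Next I would check (D1)--(D3). Inversion (D1): on $\Ph(V)$ the map $e \mapsto e^{-1}$ (well-defined projectively on $\PI(V)$) extends to $\bPh(V)$ as part of its standard $\SL(n,\bbC)/Z \times \SL(n,\bbC)/Z$ structure — or directly, inversion permutes the strata $S_k \leftrightarrow S_{n-k}$ swapping null space and range, hence lifts through the iterated blow-up; real blow-up is functorial under diffeomorphisms fixing the centres, so inversion extends to a diffeomorphism of the real blow-up. Smoothness of the left and right $\SL(n,\bbC)$ actions (part of D2, D3): these actions are already smooth on $\Ph(V)$, preserve the strata $S_k$, hence lift to smooth actions on the complex blow-up $\bPh(V)$ and then, fixing the new exceptional divisors setwise, to the real blow-up — this is the same lifting-under-blow-up argument used in the proof of Proposition~\ref{CSG.175}. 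For b-normality (D2) and b-transitivity (D3) I would argue by the analogue of \eqref{CSG.218}--\eqref{CSG.220}: near the deepest corner, over a pair of Borel subgroups, the $r$ normal defining functions $\tau_{q_i}$ correspond to the coordinates on the split part $A_P$ of the Langlands decomposition coming from the $\bbC^\times$ (radial) directions blown up in each $S_{q_i}$, so the right action of $\Fa_P$ spans the b-normal directions, and the right action of $\Fn_P$ together with $\Fk$ maps onto the tangent space of the right flag variety while the left action covers the left factor and the fibre directions — surjectivity onto $\bT$ at the deepest corner then propagates to an open set and, via the transitive $G\times G$ action on the interior, to all of $\oSL(n,\bbC)/Z$.

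Finally, (D4) minimality: again compute the right action of $\Fg$ near a point of the maximal-codimension boundary face with all fibre parameters trivial, exactly as in \eqref{CSG.220}. A positive root vector $E_\alpha \in \Fn_P$ acts on the right flag through its $\Fk$-component and simultaneously shifts the left flag, picking up a factor $\prod \tau_{q_{i'}}$ over the hypersurfaces it crosses; choosing $\alpha$ a simple root gives a vector field of the form $\tau_{q_j} v$ with $v$ a fixed tangent field not in the span of the Lie algebra (the left flag directions are not reached by the right action), which is precisely (D4). I expect the main obstacle to be (D2)/(D4): one must carefully match the real-blow-up normal coordinates $\tau_{q_i}$ to the coroot/coweight coordinates $t_i$ on $A_P$ and confirm that passing to the real blow-up of the \emph{complex} divisors produces exactly the first-power normal vanishing $x_j v_j$ (not $x_j^2 v_j$ or worse), i.e.\ that the single real blow-up of each codimension-two divisor is the minimal resolution with no residual $\bbZ_2$ or higher-order degeneracy — this is the delicate point where the "real analog of the wonderful compactification" genuinely differs from simply complexifying, and it is what forces the real rather than complex blow-up in \eqref{CSG.199}.
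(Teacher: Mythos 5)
Your overall strategy---verify (D1)--(D4) directly on $[\bPh V;S_*]_{\bbR}$ by transplanting the computations from the $\SL(n,\bbR)$ case---is sound and is, in substance, what the paper does: its stated justification is little more than ``follows as in the real case.'' The one genuine difference of route is that the paper does not work directly on the real blow-up of the wonderful compactification. Instead it first builds the hd-compactification of $\SL(n,\bbC)$ itself as the closure in the real resolution $[\Sh(V),S_*]$ of the unit sphere in $\hom(V)$ (the exact analogue of \eqref{CSG.174}), uses the normalized determinant $\widehat{\det}$ of \eqref{CSG.137}--\eqref{CSG.138} to show its level sets are p-submanifolds, and only then relates this to \eqref{CSG.199} via the circle action. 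That detour buys two things you forgo: the verification of (D1)--(D4) becomes a literal repetition of the proof of Proposition~\ref{CSG.175} rather than an adaptation, and the $U(1)$ (compact centre of the Levi) is accounted for automatically. Your direct approach is leaner but obliges you to keep track of that circle by hand; indeed your description of the boundary hypersurface $\widetilde D_j$ is slightly off---its fibre over $\Gr(V,n-j)\times\Gr(V,n-j)$ is not $\oSL(j,\bbC)/Z\times\oSL(n-j,\bbC)/Z$ but an $S^1$-bundle over (the real resolution of) $\bPh(\bbC^j)\times\bPh(\bbC^{n-j})$; the extra circle is exactly the compact centre of $M_P$ and is needed for the fibre to be $\bM_S$ as in \eqref{29.8.2019.5}.

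One step as written would not go through: for (D1) you say inversion ``permutes the strata $S_k\leftrightarrow S_{n-k}$, hence lifts through the iterated blow-up.'' Inversion is not defined on $\Ph(V)\setminus\PI(V)$ at all---it is only a birational involution of $\Ph(V)$---so there is no map on the base to lift; the content of (D1) is precisely that this birational map \emph{becomes} regular after the blow-ups. Your fallback (invoke the known bi-equivariant structure of the wonderful compactification) works, but the self-contained argument is the explicit local computation in blow-up coordinates, i.e.\ the complex analogue of \eqref{CSG.203}--\eqref{CSG.204}. Your identification of the ``delicate point'' for (D2)/(D4) is well placed, and it does resolve favourably: all roots in the nilradical of a maximal parabolic of $\SL(n)$ take the value $1$ on the normalized coweight spanning $\Fa_P$, so the normal vanishing is exactly first order in the real blow-up coordinate, as required by minimality.
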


Rather than prove this immediately we examine the compactification of
$\SL(n,\bbC)$ obtained by following the procedure for $\SL(n,\bbR)$ more
closely. That is, first project radially into the sphere in the Hermitian vector
space  
\begin{equation}
\SL(V;\bbC)\longrightarrow \Sh(V).
\label{CSG.201}\end{equation}
Again this is a diffeomorphism but now onto the smooth hypersurface, $\Sh_+(V),$ in
$\Sh(V)$ where the determinant is positive.

The actions of $\SU(V)$ on $\Sh(V),$ left and right, fix $\Sh_+(V)$ and act
freely on it. The isotropy types for the action on $\Sh(V)$ are again the
submanifolds $S_q\subset\Sh(V)$ of corank $q.$ We define a resolution of
$\SL(V)$ by taking the closure in the (real) resolution 
\begin{equation}
\bSL(V)\subset[\Sh(V),S_*].
\label{CSG.221}\end{equation}

The determinant is well-defined and non-zero on $\SI(V)\subset\SH(V)$ and
we define a normalized version by
\begin{equation}
\widehat{\det}:\SI(V)\ni g\longmapsto \frac{\det g}{|\det g|}\in\bbT.
\label{CSG.137}\end{equation}

Now the $\SU(V)$ actions extend to actions of $\UU(V)$ with the
extended actions free on $\SI(V).$ The isotropy types are the same as for
the $\SU(V)$ action, i.e. the $S_q.$ Thus in the case $W_1=W_2=V$ the
resolution \eqref{CSG.221} also resolves the $\UU(V)$ actions, which
therefore become free on $\bSH(V).$ The normalized determinant factors
through the action of $\UU(V),$
\begin{equation}
\widehat{\det}(ug)=\det(u)\widehat{\det}(g)
\label{CSG.138}\end{equation}
and it follows from this that $\widehat{\det}$ extends smoothly to
$\bSH(V)$ and has non-vanishing differential everywhere. In fact the
differential must be independent of all conormals as well, so the level
surfaces are well-defined p-submanifolds of $\bSH(V).$

Thus the definition \eqref{CSG.221} does lead to a compact manifold with
corners, with extended actions of $\SL(V),$ since the initial action
preserves the $S_q.$

That this is an hd-compactification now follows as in the real case
discussed above. Moreover there is a simple relation between the real and
complex cases:

\begin{proposition}\label{CSG.145} The closure of $\SL(n,\bbR)$ in the
  compactification $\bSL(n,\bbC)$ is a p-submanifold which is an
  hd-compactification. 
\end{proposition}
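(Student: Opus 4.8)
The plan is to identify the closure of $\SL(n,\bbR)$ in $\bSL(n,\bbC)$ with the hd-compactification $\bSL(\bbR^n)$ constructed in \S\ref{nR}, so that the Proposition follows from Proposition~\ref{CSG.175}. Fix the real form $\bbR^n\subset V$ and let $\Hom(\bbR^n)\subset\hom(V)$ be the $\bbR$-linear subspace of real matrices, the fixed set of entrywise complex conjugation $c$, an $\bbR$-linear isometry. Radial projection \eqref{CSG.201} carries $\SL(n,\bbR)$ onto $\SIp(\bbR^n)=\Sh_+(V)\cap\Hom(\bbR^n)$, the norm-one real matrices of positive determinant, which lies in the embedded submanifold $\SH(\bbR^n)=\Sh(V)\cap\Hom(\bbR^n)$; this last intersection is transversal since $\langle e,e\rangle=1\neq0$ for $e\in\SH(\bbR^n)$. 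Thus the set whose structure we must determine is the closure of $\SIp(\bbR^n)$ in the real resolution $[\Sh(V);S_*]_{\bbR}$ of \eqref{CSG.221}, and it suffices to prove that this closure is the p-submanifold $\bSL(\bbR^n)$.

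The essential input is that $\SH(\bbR^n)$ meets each center of blow-up cleanly. A real matrix whose complex kernel has dimension $q$ has real kernel of the same dimension — the kernel is defined over $\bbR$ — so $\Hom(\bbR^n)\cap S_q(V)=S_q(\bbR^n)$, the real corank-$q$ stratum. Moreover, in the two polar normal forms used in the proof of Proposition~\ref{CSG.126} (see \eqref{CSG.120} and its analogues) the complex directions normal to $S_q(V)$ in $\Sh(V)$ restrict along $\Hom(\bbR^n)$ to real directions normal to $S_q(\bbR^n)$ in $\SH(\bbR^n)$; equivalently $T_eS_q(V)\cap\Hom(\bbR^n)=T_eS_q(\bbR^n)$ for each $e\in S_q(\bbR^n)$. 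Hence $\big(\SH(\bbR^n),S_1(V),\dots,S_{n-1}(V)\big)$ is a cleanly intersecting family in $\Sh(V)$ — the intersection $\SH(\bbR^n)\cap S_q(V)$ carries excess dimension $q^2$, so it is clean but not transversal.

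By the naturality of real blow-up for clean pairs, the first step — blow-up of the deepest stratum $S_{n-1}(V)$ — replaces $\SH(\bbR^n)$ by its proper transform, a p-submanifold of $[\Sh(V);S_{n-1}(V)]$ canonically diffeomorphic to $[\SH(\bbR^n);S_{n-1}(\bbR^n)]$, and the lift of $\SH(\bbR^n)$ together with the lifts of the remaining centers $S_q(V)$, $q<n-1$, again form a cleanly intersecting family: on the exceptional divisor, fibered over $\Gr(V,1)\times\Gr(V,1)$ with model fiber a lower-dimensional $\bSH$ as in \eqref{CSG.131}--\eqref{CSG.172}, the real form restricts to the corresponding fibered real structure over the real Grassmannians, and one is reduced to the same statement in lower dimension. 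Iterating in order of increasing dimension exactly as in \S\ref{nR}, the closure of $\SI(\bbR^n)=\SH(\bbR^n)\setminus\bigcup_qS_q(V)$ in $[\Sh(V);S_*]_{\bbR}$ is the p-submanifold $[\SH(\bbR^n);S_*(\bbR^n)]_{\bbR}=\bSH(\bbR^n)$. Restricting to the positive-determinant component — cut out by a smooth function whose differential is independent of all conormals, as in \eqref{CSG.138} — the closure of $\SIp(\bbR^n)$ is $\bSL(\bbR^n)$, a p-submanifold of $\bSL(n,\bbC)$, and Proposition~\ref{CSG.175} identifies it as an hd-compactification of $\SL(n,\bbR)$.

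The step that needs care is the propagation of cleanness through the successive blow-ups: one must check that after each stage the lift of the real form still meets the next lifted center cleanly, including along the corners already created. This is precisely what the fibered description of each boundary hypersurface in Proposition~\ref{CSG.126} supplies, once one records that the real form $\Hom(\bbR^n)$ is compatible with each fibration and with each inductive model fiber. An alternative route is to note that $c$ lifts to a smooth involution $\wt c$ of $[\Sh(V);S_*]_{\bbR}$, so that the closure of $\SI(\bbR^n)$ is a union of components of the p-submanifold $\mathrm{Fix}(\wt c)$; but one must then rule out components of $\mathrm{Fix}(\wt c)$ lying entirely in the boundary, which is the same local computation in the normal form.
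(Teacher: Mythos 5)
The paper states Proposition~\ref{CSG.145} without any proof, so there is no argument of the authors' to compare against; judged on its own terms your argument is correct and is the natural route. The two key computations both check out: a real matrix has complex corank equal to its real corank, so $\Hom(\bbR^n)\cap S_q(V)=S_q(\bbR^n)$, and since the kernel and cokernel projections of a real matrix are themselves real, $T_eS_q(V)$ is the complexification of $T_eS_q(\bbR^n)$, giving the clean (excess $q^2$, non-transversal) intersection you describe; the radial direction argument for $\Sh(V)\pitchfork\Hom(\bbR^n)$ is also fine. Given cleanness, the lift of $\SH(\bbR^n)$ under blow-up of $S_{n-1}(V)$ is canonically $[\SH(\bbR^n);S_{n-1}(\bbR^n)]$ embedded as a p-submanifold (the real spherical normal bundle of $S_{n-1}(\bbR^n)$ in $\SH(\bbR^n)$ injects into the front face), and the compatibility of the complex polar decomposition of a real matrix with the real one means the collar \eqref{CSG.120} restricts to the real collar, which is exactly what is needed to propagate cleanness to the next center. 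The one step you rightly single out --- that after each stage the lifted real form still meets the remaining lifted centers cleanly, including along the corners already created --- is where a full write-up would have to invoke the fibered description \eqref{CSG.128} over the real Grassmannians sitting inside the complex ones and run the induction on $k$; as sketched this is believable but is the only place where the argument is not yet a complete proof. The alternative via the lifted conjugation involution $\wt c$ is also viable and arguably cleaner for the p-submanifold claim, though, as you note, it still requires the same local normal-form computation to identify the fixed components meeting the interior and to exclude spurious boundary components.
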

\section{Direct construction}\label{DC}

In the construction of an hd-compactification of $G,$ a real reductive
group with compact center, we use the root space decomposition, see for
example \cite{MR1880691}, which we proceed to summarize briefly. First we
make the choice of a `real Cartan subgroup' of $G,$ a maximal product of
multiplicative half-lines, $A$ with Lie algebra $\Fa$ and
compatible Cartan involution. The adjoint action of $\Fa$ 
on the full Lie algebra, $\Fg,$ is symmetric and decomposes $\Fg$ into the
joint null space and a direct sum of eigenspaces with
corresponding joint eigenvectors, the roots, in $\Fa^*.$ The choice of a
positive subspace, $R_+,$ of the roots induces an Iwasawa decomposition
\begin{equation}
G=KAN
\label{CSG.176}\end{equation}
where $K$ is a maximal compact subgroup and $N$ is the unipotent group
generated by the span of the strictly positive root (eigen)vectors. This also
induces a decomposition 
\begin{equation}
A=\prod_{i\in D}^rA_i,\ \tau_i:A_i\simeq \bbR^+,\ \#(D)=r,
\label{CSG.177}\end{equation}
as a product of half-lines labeled by the nodes, $D,$ of the Dynkin
diagram. The parameters in the factors which may be identified with the
coweights. Thus $\Fa$ is spanned by the $\tau_i\pa_{\tau_i}$ and each of
the root vectors spanning the Lie algebra $\mathfrak{n}$ of $N$ has
non-negative homogeneity under the adjoint action $A_i$ (with at least one
positive exponent). This decomposes $\Fn$ as a graded algebra under the
conjugation action of the $A_i;$
\begin{equation}%
\begin{gathered}
\Ad(a)n =\tau^{\alpha }n,\ n\in
\mathfrak{n}_\alpha,\ a\in A,\ \Fn=\bigoplus_{\alpha\in R_+} \mathfrak{n}_\alpha\\
\mathfrak{n}_{\alpha +\beta }=[\Fn_\alpha ,\Fn_\beta ]\Mif \alpha ,\ \beta
,\ \alpha +\beta \in R_+ .
\end{gathered}
\label{CSG.178}\end{equation}

These choices correspond to a particular minimal parabolic subgroup 
\begin{equation}
P=MAN
\label{CSG.179}\end{equation}
where in this case the real-reductive part is compact.  The
other minimal parabolics are conjugate to this one, under the action of $K,$
and so have a similar  decomposition \eqref{CSG.178} and \eqref{CSG.179}.

Each conjugacy classes of parabolic sugroups correspond to a subset $S\subset
D$ where the minimal case corresponds to the empty set. To $S$ we
associate the group
\begin{equation}
A_S=\prod_{i\notin S}A_i\subset A.
\label{CSG.180}\end{equation}
The normalizer of $A_S$ in $G$ contains a maximal real-reductive subgroup
$M_S$ with compact center fixed by the Cartan involution. The corresponding
parabolic subgroup has Langlands decomposition
\begin{equation}
P_S=M_SA_SN_S
\label{CSG.182}\end{equation}
where the unipotent group $N_S$ has Lie algebra 
\begin{equation}
\mathfrak{n}_S=\bigoplus_{\gamma \in R^S_+}n_\gamma .
\label{CSG.181}\end{equation}
Here $R^S_+$ is the subset of the positive roots which have a positive
value on one of the basis elements of the Lie algebra of $A_S.$ This induces an action
of $A_S$ of the form \eqref{CSG.178} for these restricted roots.

Again the conjugation action of the maximal compact subgroup in the initial
Iwasawa decomposition induces similar decompositions of all the conjugate
parabolics $P=M_PA_PN_P$ independent of the indeterminacy in conjugation.

Let $\cF_S$ be the flag variety of parabolic subgroups conjugate to
$P_S.$ For a pair $(P',P)\in\cF_S\times \cF_S$ consider the space
\begin{equation}
F(P,P')=\{g\in K\cdot M_P;M_{P'}g=gM_P\}.
\label{29.8.2019.1}\end{equation}

\begin{lemma}\label{29.8.2019.4} For each $S\subset D,$ the spaces $F(P',P)$ have
principal $M_P$ actions on the right and $M_{P'}$ actions  on the left and
form a smooth bundle with total space $F_S$
\begin{equation}
\xymatrix{
M_S\ar@{-}[r]&F_S\ar[d]\\
&\cF_S\times\cF_S
}
\label{CSG.161a}\end{equation}
and fiber modeled on $M_S.$
\end{lemma}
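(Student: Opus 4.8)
The plan is to identify the set $F(P',P)$ from \eqref{29.8.2019.1} concretely and to exhibit the projection $(P',P)\colon F_S\to\cF_S\times\cF_S$ as a locally trivial bundle by using the transitive conjugation action of $K$. First I would fix a base parabolic $P_S=M_SA_SN_S$ in the given conjugacy class, coming from the Iwasawa decomposition \eqref{CSG.176}; then any $P\in\cF_S$ is of the form $P = kP_Sk^{-1}$ for some $k\in K$, with $M_P = kM_Sk^{-1}$, and the stabilizer of $P$ under this action is $K\cap P_S = K\cap M_S$ (since $K\cap A_SN_S$ is trivial), a compact subgroup. Thus $\cF_S\simeq K/(K\cap M_S)$ as a homogeneous space, and the conjugation action of $K\times K$ on $\cF_S\times\cF_S$ is transitive with stabilizer of the base point $(P_S,P_S)$ equal to $(K\cap M_S)\times(K\cap M_S)$.

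Next I would analyze the fiber over the base point $(P_S,P_S)$, namely
\[
F(P_S,P_S)=\{g\in K\cdot M_S : M_S g = g M_S\}.
\]
The condition $M_S g = g M_S$ says $g$ normalizes $M_S$; combined with $g\in K\cdot M_S$ this forces $g\in M_S$ itself (the normalizer of $M_S$ inside $K\cdot M_S$ meets $K$ only in $K\cap M_S\subset M_S$, using that $M_S$ is its own normalizer in the relevant sense and that the compact-center reductive group $M_S$ is a closed subgroup). Hence the fiber over $(P_S,P_S)$ is exactly $M_S$, and the left $M_S$-action and right $M_S$-action on it (by left and right multiplication) are free and transitive, so this fiber is a principal $M_S$-bispace. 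For a general point, conjugating by $(k',k)\in K\times K$ carries $F(P_S,P_S)$ to $F(P',P)$ with $P'=k'P_Sk'^{-1}$, $P=kP_Sk^{-1}$, via $g\mapsto k' g k^{-1}$; this shows every fiber is isomorphic to $M_S$ as a left-$M_{P'}$, right-$M_P$ space, and that $F_S = \bigsqcup_{(P',P)} F(P',P)$ carries commuting principal actions of the bundle of groups $M_{P'}$ (on the left) and $M_P$ (on the right).

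Finally, for local triviality and smoothness of the total space $F_S$, I would choose a local section $s\colon U\to K$ of $K\to\cF_S$ over an open neighborhood $U$ of the base point (such a section exists since $K\to K/(K\cap M_S)$ is a principal bundle), and similarly $s'$ over $U'$; then the map $U'\times U\times M_S \to F_S$, $(P',P,m)\mapsto s'(P')\, m\, s(P)^{-1}$, is a diffeomorphism onto the preimage of $U'\times U$ compatible with the $M_S$-bi-actions, giving the bundle chart in \eqref{CSG.161a}. Transition functions are then built from the transition functions of $K\to\cF_S$ acting on $M_S$ by the adjoint conjugation action restricted to $K\cap M_S$, which lands in $\Aut(M_S)$; since this is smooth the bundle structure is well defined. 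The main obstacle I anticipate is the identification of the base fiber: showing rigorously that $g\in K\cdot M_S$ with $M_S g=gM_S$ forces $g\in M_S$ — i.e. controlling the normalizer of $M_S$ — which requires care about the interplay of the Cartan involution, the compactness of the center of $M_S$, and the Iwasawa-type decomposition $K\cdot M_S$; everything else is a standard homogeneous-space fibration argument.
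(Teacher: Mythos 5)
Your overall strategy is the same as the paper's: use the transitivity of the conjugation action of $K$ on $\cF_S$, with isotropy $K\cap M_P$ at $P$, to identify each fiber with a coset $kM_P$ and obtain the homogeneous bundle structure; the paper's proof is a two-line version of exactly this, and your local-section argument over $K\to\cF_S$ is the standard way to make the local triviality explicit.

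The one step that does not hold as you state it is the identification of the base fiber. You claim that $g\in K\cdot M_S$ with $M_Sg=gM_S$ forces $g\in M_S$ because ``the normalizer of $M_S$ inside $K\cdot M_S$ meets $K$ only in $K\cap M_S$.'' That is false in general: for a minimal parabolic, $M=Z_K(A)$ is normalized by all of $N_K(A)$, so $N_K(M)/(K\cap M)$ contains the Weyl group; in the extreme case $G=\SL(2,\bbR)$ one has $M=\{\pm I\}$ central in $K=\SO(2)$, so every $g\in K=K\cdot M$ satisfies $Mg=gM$ and the set in \eqref{29.8.2019.1}, read literally, is all of $\SO(2)$ rather than $M$. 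What actually cuts the fiber down to a single coset is the requirement that $g$ conjugate the parabolic $P$ to $P'$ (compatibly with the Langlands decompositions), since $N_K(P)=K\cap P=K\cap M_P$; this is what the paper's proof implicitly uses by invoking the isotropy group of the $K$-action on $\cF_S$ (i.e.\ on parabolics) rather than the normalizer of $M_P$ itself. Your argument goes through once the normalizer-of-$M_S$ claim is replaced by the normalizer-of-$P_S$ computation; as written, that step would fail.
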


\begin{proof} The action of $K$ on $\cF_S,$ by conjugation on $G,$ is
  transitive with isotropy group at $P\in\cF_S$ the subgroup $K\cap M_P.$
  It follows that if $k\in K$ and $kPk^{-1}=P'$ then $F(P,P')=kM_P$ from
  which the result follows.
\end{proof}

The properties of the putative compactification functor imply that it
extends to principal spaces such as $F(P,P')$ so, proceeding inductively
over dimension or real rank, we can define the \emph{closed} boundary face
corresponding to each conjugacy class of parabolics in $G$ as the fiberwise
compactification of the smooth bundle \eqref{CSG.161a} with fibre the
compactification of $F(P',P)\simeq M_S$
\begin{equation}
\xymatrix{
\bM_S\ar@{-}[r]&\bF_S\ar[d]\\
&\cF_S\times\cF_S.
}
\label{29.8.2019.5}\end{equation}

We define a partial compactification
\begin{equation}
A_S\hookrightarrow \bA_S
\label{CSG.183}\end{equation}
of each $A_S$ by adding a point at infinity to each factor, with smooth
structure given by the parameter $\tau_i^{-1}$ where $\tau_i$ is the
multiplicative variable in \eqref{CSG.177}. Thus $\bA_S$ is a product of
$\#(D\setminus S)$ half-closed intervals. The conjugation action of $K$
induces corresponding compactifications $\bA_P$ of each $A_P$ and so
defines a bundle over $\cF_S$ and hence another level on the fiber bundles
\eqref{29.8.2019.5}:
\begin{equation}
\xymatrix{
\bA_S\ar@{-}[r]&N_+\bF_S\ar[d]\\
\bM_S\ar@{-}[r]&\bF_S\ar[d]\\
&\cF_S\times\cF_S.
}
\label{CSG.184}\end{equation}
The fibre of $N_+\bF_S$ is $\bA_P$ over $(P,P')$ but may also be naturally
identified with $\bA_{P'}.$ The presumptuous notation $N_+\bF_S$ presages the
identification with the inward-point normal bundle to $\bF_S$ as a boundary
face of $\bG.$

To construct $\bG$ as a smooth space we define gluing maps, the most
fundamental one (recalling that we proceed by induction) is for the minimal
face corresponding to \eqref{CSG.184} with $S=\emptyset;$ in this case we
drop the subscript. Then $M$ is already compact. The partially compactified
fibre at each point $(B,B')\in\cF\times\cF$ has a point of maximal
codimension, corresponding to passage to infinity in all factors
$A_i\subset A.$ This defines a section, realizing the zero section of
$N_+F;$ the bundle $F$ in this case is already compact.

\begin{proposition}\label{CSG.187} The map 
\begin{equation}
\gamma :N_+F\ni (m,a)\longrightarrow ma\in G,\ m\in F(P,P'),\ (P',P)\in\cF\times\cF
\label{CSG.185}\end{equation}
is smooth and if $\bO\subset N_+F$ is a small (relatively) open
neighborhood of the zero section of $N_+F,$ corresponding to
minimal parabolic subgroups, then $\gamma$ is a diffeomorphism of
$O,$ the intersection of $\bO$ with the interior
of $N_+F$ as a manifold with corners, onto an open subset of $G.$
\end{proposition}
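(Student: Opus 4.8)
The plan is to reduce the statement to a local normal-form computation near a point of the zero section of $N_+F$, using the Iwasawa decomposition and the explicit coordinates $\tau_i^{-1}$ on $\bA$. First I would fix $(B',B)\in\cF\times\cF$ and $m_0\in F(B,B')$, and use the transitivity of $K$ on $\cF$ (with isotropy $K\cap M_P$, from Lemma~\ref{29.8.2019.4}) together with the left and right actions of $K$ to translate the base point to the standard flag, where the Iwasawa decomposition $G=KAN$ of \eqref{CSG.176} is in force. Near such a point the bundle $N_+F$ is, in the coordinates of \eqref{CSG.184}, a product $\cU\times\bA$ where $\cU$ is a chart in $\cF\times\cF$ and $\bA=\prod_{i\in D}[0,\epsilon)_{\tau_i^{-1}}$ is the partially compactified Cartan subgroup; the zero section sits at $\tau_i^{-1}=0$ for all $i$, i.e.\ at $\tau_i=\infty$. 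The map $\gamma$ in these coordinates sends $(u,\tau)\mapsto k(u)\,m_0\,a(\tau)\,k'(u)$ with $k(u),k'(u)\in K$ depending smoothly on $u$ and $a(\tau)\in A$.

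The key step is to show that this expression extends smoothly to $\tau_i^{-1}=0$ and is a local diffeomorphism onto its image in $G$ when all $\tau_i^{-1}>0$. Smoothness of the extension is where the hd-compactification $\bM$ of the reductive part enters inductively: by construction $\bF$ already incorporates the compactification $\bM_S$ of the lower-rank reductive factors (trivial here since $M$ is compact when $S=\emptyset$), and the claim is that conjugating $a(\tau)$ past the unipotent directions produces only non-negative powers of the $\tau_i^{-1}$, so nothing blows up. Concretely, writing $m_0 a(\tau) = a(\tau)\bigl(a(\tau)^{-1}m_0 a(\tau)\bigr)$ and using the graded structure \eqref{CSG.178}, $\Ad(a(\tau)^{-1})$ scales each root vector $n_\alpha$ by $\tau^{-\alpha}$, which tends to $0$ (smoothly, polynomially in the $\tau_i^{-1}$) as the $\tau_i\to\infty$; this is exactly the mechanism already visible in \eqref{CSG.218} and the remark following it for $\SL(n,\bbR)$. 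Hence $\gamma$ extends smoothly as a map of manifolds with corners.

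For the diffeomorphism claim on the interior $O$, I would compute the differential of $\gamma$ at an interior point and show it is an isomorphism $T(N_+F)\to T G$. The tangent space to $N_+F$ at $(u,\tau)$ splits as (tangent to $\cF\times\cF$) $\oplus$ (tangent to the $\bA$-fibre), of total dimension $\dim\cF+\dim\cF+\operatorname{rank} = 2\dim(G/K\cap \text{min.\ parabolic's reductive part})+\dim A$, which one checks equals $\dim G$ by the Iwasawa count $\dim G=\dim K+\dim A+\dim N$ and $\dim\cF=\dim N$. Injectivity of $d\gamma$ in the interior follows because in the open dense set where all $\tau_i>0$ the map $(k,a,k')\mapsto k m_0 a k'$ is, up to the $K\cap M$-isotropy already quotiented out in the definition of $F$, just a smooth reparametrization of the Iwasawa product $KAN\cong G$ (the left flag coordinate, the $A$-coordinate, and the right flag coordinate together recovering $(k,a,k')$ modulo compact isotropy). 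Shrinking $\bO$ to a small neighbourhood of the zero section makes $\gamma$ injective globally on $O$ by the inverse function theorem together with a standard properness/separation argument: distinct points of $\cF\times\cF$ go to Iwasawa data with distinct flags, and on each fibre $a\mapsto m_0 a$ is injective into $G$.

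The main obstacle I expect is the smoothness of the extension across the corner $\{\tau_i^{-1}=0\ \forall i\}$ of maximal codimension, i.e.\ verifying that after conjugating the $A$-factor through the nilpotent directions one genuinely gets a smooth (not merely continuous) function of the boundary defining coordinates $\tau_i^{-1}$, with no logarithmic or fractional-power terms sneaking in. This is handled by the grading \eqref{CSG.178}: every matrix entry of $k(u)m_0 a(\tau)k'(u)$, after the radial-type normalization implicit in the construction of $N_+F$, is a polynomial in the $\tau_i^{-1}$ with coefficients smooth in $u$, because the positive roots have non-negative — and on the relevant factor strictly positive — exponents, exactly as in the passage from \eqref{CSG.135} to \eqref{CSG.203} in the $\SL(n,\bbR)$ case. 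Once smoothness is in hand, the diffeomorphism statement on $O$ is essentially the inverse function theorem applied to the Iwasawa decomposition, localized near the boundary.
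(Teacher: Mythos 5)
Your proposal is organized around the Iwasawa decomposition, but the map $\gamma$ in the minimal case is the \emph{Cartan} ($KAK$) decomposition: identifying $\cF=K/Z$, the total space of $F$ is $K\times_Z K$ and $\gamma$ becomes $(k_1,k_2,a)\mapsto k_1ak_2$, descending to $K\times_Z K\times A$. The paper's proof consists of observing this and then invoking the uniqueness (up to the indeterminacy in $Z=Z_K(A)$) and the locally smooth choice of the Cartan factors when all components of $a$ are large, i.e.\ when $a$ is regular and deep in the positive chamber. That regularity is exactly what restricting to a small neighbourhood $\bO$ of the zero section buys, and it is essential: the $KAK$ map is \emph{not} a local diffeomorphism at singular $a$ (at $a=e$ its differential has kernel of dimension $\dim K-\dim Z$). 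Your key assertion --- that $(k,a,k')\mapsto km_0ak'$ is ``just a smooth reparametrization of the Iwasawa product $KAN\cong G$'' --- is therefore precisely the content of the proposition and cannot be taken for granted; the Iwasawa map is a global diffeomorphism while the Cartan map degenerates off the regular set, so the two are not reparametrizations of one another, and you offer no mechanism for recovering the two flags and the $A$-coordinate smoothly from $g$ (this needs either the polar/singular-value analysis of \S\ref{nR} or the standard uniqueness argument for the $KAK$ decomposition at regular $a$). This is a genuine gap.

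Two smaller problems. First, your dimension count $2\dim\cF+\dim A=\dim G$ omits the fibre of $F\to\cF\times\cF$, which is modeled on $M=Z_K(A)$; the correct count is $\dim G=2\dim\cF+\dim M+\dim A$, and $\dim M>0$ for non-split groups (e.g.\ $\SL(2,\bbC)$ as a real group), so your local model $\cU\times\bA$ with $m_0$ frozen misses tangent directions. Second, much of your effort goes into smoothness of the extension of $\gamma$ across the corner $\{\tau_i^{-1}=0\}$ via the grading \eqref{CSG.178}; but the proposition only asserts smoothness of $\gamma$ as a map into $G$ (immediate, since it is group multiplication) together with the diffeomorphism property on the interior part $O$ --- the extension to the boundary faces is the business of Theorem~\ref{29.8.2019.6}, not of this proposition.
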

\noindent In fact we can take $\bO$ to be invariant under the left and
right actions of $K.$

\begin{proof} The map is well-defined and smooth. In the discussion above
  we have chosen a base minimal parabolic and this induces an
  identification $\cF=K/Z.$ The total space of the bundle $F,$ the boundary
  face here is $K\times_Z K$ and the map becomes
\begin{equation}
g=k_1ak_2,\ k_i\in K,\ a\in A
\label{CSG.188}\end{equation}
which descends to $K\times_Z K\times A.$ This is the Cartan
decomposition. In the open set $O,$ where all the components of $a$ are
large, the decomposition is unique, up to the indeterminacy in the
center. The factors may be chosen smoothly, locally, which shows
\eqref{CSG.185} to be a smooth bijection onto its open image with smooth
local inverses, i.e.\ a diffeomorphism.
\end{proof}

To complete the construction of $\bG$ we extend the map \eqref{CSG.188} to
the bundles corresponding to the other parabolics and then to similar maps
between these bundles. For each $S\subset D$ the total space of $F_S$ may
be identified with
\begin{equation}
F_S\simeq K\times_{K_S}M_S\times_{K_S}K,\ K_S=K\cap M_S, P_S=M_SA_SN_S.
\label{CSG.205}\end{equation}
Then consider the smooth map 
\begin{equation}
\gamma_S:K\times M_S\times K\times  A_S\ni(k_1,m,k_2,a)\longmapsto k_1mak_2\in G.
\label{CSG.206}\end{equation}
This descends to $F_S,$ using \eqref{CSG.205}.

\begin{theorem}\label{29.8.2019.6} The total spaces of the fibrations \eqref{29.8.2019.5}
form the boundary stratification of an hd-compactification 
\begin{equation}
\overline{G}=G\sqcup \bigcup_{S\subsetneq D}\bF_S
\label{29.8.2019.7}\end{equation}
where the \ci\ structure, near each boundary face, is fixed by the gluing maps
\eqref{CSG.206}.
\end{theorem}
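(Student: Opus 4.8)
The plan is to construct $\bG$ as in \eqref{29.8.2019.7} by induction on the real rank (equivalently on $\dim G$), using the fiberwise-compactified bundles $\bF_S$ of \eqref{29.8.2019.5} supplied by the inductive hypothesis applied to each $M_S$ (which has strictly smaller real rank whenever $S\neq\emptyset$), and then verifying the four axioms (D1)--(D4) of Definition~\ref{CSG.167}. The heart of the matter is \emph{gluing}: one must show that the maps $\gamma_S$ of \eqref{CSG.206}, together with the already-treated minimal map $\gamma$ of \eqref{CSG.185} (Proposition~\ref{CSG.187}), fit together into a single consistent \ci\ atlas on the set-theoretic union \eqref{29.8.2019.7}, so that the closed boundary faces $\bF_S$ are embedded hypersurfaces' intersections and $N_+\bF_S$ of \eqref{CSG.184} is literally the inward-pointing b-normal bundle of $\bF_S$ in $\bG$.

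First I would fix a minimal parabolic and hence an Iwasawa decomposition \eqref{CSG.176}, giving the product structure \eqref{CSG.177} of $A$ and the half-line coordinates $\tau_i$, $i\in D$. For each $S\subsetneq D$ the group $A_S$ of \eqref{CSG.180} is a product of $\#(D\setminus S)$ half-lines, partially compactified to $\bA_S$ by adjoining $\tau_i=\infty$, i.e.\ by the new defining functions $\tau_i^{-1}$; conjugation by $K$ spreads this over $\cF_S$ to give $N_+\bF_S$ as in \eqref{CSG.184}. The key local model near a point of $\bF_S$ is then: a neighbourhood in $N_+\bF_S$ of a point of the zero section, with fiber coordinate $a=(\tau_i^{-1})_{i\notin S}\in[0,\epsilon)^{\#(D\setminus S)}$, base point $(P',P)\in\cF_S\times\cF_S$, and a point of the fiber $\bM_P\simeq\overline{F(P',P)}$ (inductively compactified). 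The gluing map $\gamma_S$ of \eqref{CSG.206} sends such data into $G$ by $m\mapsto k_1 m a k_2$; away from the boundary this is (a smooth local section of) the refined Cartan/Langlands decomposition $G=K\,(M_S A_S)\,K$ relative to $P_S$. I would show $\gamma_S$ is a diffeomorphism from the interior of a small $K\times K$-invariant neighbourhood $\bO_S$ of the zero section onto an open subset of $G$, exactly as in the proof of Proposition~\ref{CSG.187}, using that the decomposition is unique once all the coordinates $\tau_i^{-1}$, $i\notin S$, are small (equivalently all the corresponding $\tau_i$ are large). The \emph{compatibility} of these charts on overlaps $\bO_S\cap\bO_{S'}$ — where one is near both $\bF_S$ and $\bF_{S'}$, so near $\bF_{S\cap S'}$ or more generally near $\bF_T$ for various $T$ — is forced by the associativity/transitivity of the Langlands decompositions: passing to infinity in the $\tau_i^{-1}$ with $i\notin S$ first and then in the remaining ones realizes the iterated parabolic structure $P_T\subset P_S$, and the corresponding identifications of $\bM_P$ with the inductively constructed compactification of the Levi are exactly those built into \eqref{29.8.2019.5}. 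This is where the iterated tangent structure \eqref{CSG.163} on the flag varieties, and the graded commutator relations \eqref{CSG.178}, get used: they guarantee that the several $\tau_i^{-1}$ are genuine boundary defining functions meeting normally (so the $H_i$ are embedded and the boundary faces are $\bF_S=\bigcap_{i\notin S}H_i$), and that the transition maps between the $\gamma_S$-charts are smooth with the product-of-powers b-map form.

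Having the \ci\ atlas, the axioms follow as in \S\ref{nR}. (D1): inversion $g\mapsto g^{-1}$ carries $k_1 m a k_2$ to $k_2^{-1}\,m^{-1}\,a^{-1}\,k_1^{-1}$; since $a^{-1}$ just swaps the roles of the two boundary copies of $\cF_S$ and $m\mapsto m^{-1}$ is a diffeomorphism of $\bM_S$ by the inductive hypothesis, inversion extends to a diffeomorphism of $\bG$ (compare \eqref{CSG.203}--\eqref{CSG.204}). (D2) b-normality and (D4) minimality: it suffices, by $G\times G$-equivariance of the charts, to compute the right action of $\Fg$ near one point of the deepest face $\bF_\emptyset$; writing the right action of a positive root vector $n\in\Fn_\alpha$ in the $\gamma$-coordinates, one finds — exactly as in \eqref{CSG.220} — that it acts on the right-hand flag through $\tfrac12(\kappa)$ up to a factor $\prod_{i}\tau_i^{\alpha_i}=\prod \tau_i$ over the relevant nodes, and the coweight directions $\tau_i\pa_{\tau_i}$ lie in $\Fa$, hence in the span of the Lie algebra; this both exhibits the b-normal directions inside the isotropy algebra (D2) and, taking $\alpha$ a simple root, produces a vector field $x_j v_j$ vanishing simply on a single hypersurface with $v_j$ not in the span (D4). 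The isotropy group of the right action at a point of $F_S$ is $A_P N_P$, as claimed in the Introduction, because those are exactly the directions that have been `pushed to the boundary'. (D3) b-transitivity: over the interior $G\times G$ acts transitively; near $\bF_\emptyset$ the right action surjects onto the tangent of the right flag variety and the left action onto the left one, the b-normal $\bN\bF_\emptyset$ is hit by either action, and the fiber $\bM_\emptyset$ is a point, so $\Fg\oplus\Fg\to\bT\bG$ is onto at $\bF_\emptyset$ and hence (by openness of surjectivity, and since every $G\times G$-orbit meets a neighbourhood of $\bF_\emptyset$, being dense) onto everywhere.

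The step I expect to be the real obstacle is the cocycle/compatibility check for the atlas: verifying that on the overlap of two gluing charts $\gamma_S,\gamma_{S'}$ the transition map is a \emph{smooth} b-map with the correct product-of-powers form, and in particular that the defining functions $\tau_i^{-1}$ introduced at different inductive stages agree up to smooth positive factors. Equivalently, one must show that the inductively constructed $\bF_S$ sits inside $\bG$ as the claimed fiber bundle \eqref{CSG.161} over $\cF_S\times\cF_S$ with model fiber $\bM_S$, \emph{and} with normal bundle $N_+\bF_S$ matching \eqref{CSG.184}; this forces one to track how the Langlands decomposition $M_S A_S N_S$ of a non-minimal parabolic nests inside the minimal Iwasawa data when several $\tau_i\to\infty$ at different rates, and to see that the grading \eqref{CSG.178} makes all the resulting coordinate changes polynomial-homogeneous rather than transcendental. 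The explicit matrix computations of \S\ref{nR}--\S\ref{nC} are the template, and the abstract version just repackages \eqref{CSG.135}--\eqref{CSG.204} in terms of the root data; once that bookkeeping is in place, everything else is a routine translation of the $\SL(n,\bbR)$ argument.
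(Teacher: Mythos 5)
Your proposal follows essentially the same route as the paper's proof: induction on dimension using the inductively compactified fibres $\bM_S$ in \eqref{29.8.2019.5}, the gluing maps $\gamma_S$ made consistent on overlaps via their factorization through lifted maps $N_+\bF_{S_1}\longrightarrow N_+\bF_{S_2}$ (the paper's $\gamma_2=\gamma_1\circ\gamma_{12}$, which is exactly the ``associativity of Langlands decompositions'' you invoke), and then verification of (D1)--(D4) from the Cartan decomposition and the grading \eqref{CSG.178}. The one detail to sharpen is inversion: $a^{-1}$ tends to infinity in the opposite Weyl chamber, so one lands on the boundary data of the opposite parabolic $P_-$ (using $F(P',P)=F(P'_-,P_-)$ and $a'$ near infinity in $A_{P'_-}$), not merely with the two flag factors interchanged.
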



\begin{proof} As noted above,
  we proceed by induction over the dimension of $G,$ allowing for
  real-reductive groups with compact centers. In particular this means that
  we take as given the compactification in \eqref{29.8.2019.5}, and hence
  all the $\bF_S$ are well-defined when we come to construct $\bG.$

The partial order on the $S\subset D$ by inclusion induces a partial order
on the $\bF_S$ which corresponds to inclusion as boundary faces. Consider
two subsets $S_1\subsetneq S_2\subsetneq D$ -- so excluding the interior
which corresponds to  $D.$ Set $M_i=M_{S_i}$ etc, so in particular
$K_i=K\cap M_i.$ The partially compactified `normal' spaces are
products
\begin{equation}
\bA_{1}=\bA_{2}\times\bA_{12}.
\label{CSG.207}\end{equation}
There is a corresponding `lifted' gluing map
\begin{multline}
\gamma_{12}:
K\times K_2\times M_1\times K_2\times K \times A_1\ni(k',k'_1, m,k_1'', k'',a_2a_{12})\\
\longmapsto
(k',k'_1ma_{12}k_1'',k'',a_2)
\label{CSG.208}\end{multline}
which descends to a map 
\begin{equation}
N_+F_1=K\times_{K_1}M_1\times_{K_1}K\times A_1\longrightarrow
K\times_{K_2}M_2\times_{K_2}\times A_2=N_+F_2.
\label{CSG.209}\end{equation}
Since these are proper boundary faces, corresponding to lower dimensional
groups $M_S,$ the inductive hypothesis means that the maps extend to smooth
maps on the compactified spaces
\begin{equation}
\gamma_{12}:N_+\bF_1\longrightarrow N_+\bF_2.
\label{CSG.212}\end{equation}
Moreover the gluing maps \eqref{CSG.206}, $\gamma_i$ to $G$ for the two
boundary faces factor
\begin{equation}
\gamma_2=\gamma_1\circ\gamma_{12}.
\label{CSG.210}\end{equation}

If $U_S\subset M_S$ is an open subset with compact closure and $\bO_S$ is a
sufficiently small, relatively open neighborhood of the point at infinity
in $\bA_S,$ corresponding to the zero section of the normal bundle, then
the map $\gamma_S$ restricted to the image of the set $K\times U_S\times
K\times O_S$ is a diffeomorphism onto its range in $G.$

Now, starting from the face of maximal codimension, we can successively
choose such subsets which together cover the union of the boundary faces in
the sense that for all $S$
\begin{equation}
M_S\subset U_S\cup \bigcup_{S'\subsetneq S}\gamma_{SS'}(K\times
M_{S'}K\times K\times O_{SS'}).
\label{CSG.211}\end{equation}

These choices give the \ci\ structure on $\bG.$ Namely all of the maps
$\gamma_S$ and $\gamma_{SS'}$ are diffeomorphisms when restricted to these
small domains. The conditions \eqref{CSG.210} mean that the
maps on the components of the covering of $M_S$ in \eqref{CSG.211},
$\gamma_S$ on $U_S$ and the composite map $\gamma_{S'}\gamma_{SS'}^{-1}$ on
the other parts are consistent on overlaps. Each of these maps identifies
an open neighborhood of the boundary face $F_S$ with a corresponding open
subset of the inward-pointing normal bundle $N_+F_S$ with smoothness on the overlaps, thus
making $\bG$ into a manifold with corners.

To see that $\bG$ is an hd-compactification of $G$ we check the
conditions in Definition~\ref{CSG.167}. First observe the effect of
inversion on the image of one of the gluing maps 
\begin{equation}
(k_1mak_2)^{-1}=k_2^{-1}m^{-1}a'k_1^{-1}.
\label{CSG.213}\end{equation}
Here $(P,P')$ are reversed, $m^{-1}\in F(P',P)=F(P'_-,P_-)$ and $a'$ is a
point near infinity in $A_{P'_-}$ where $P_-$ is the opposite parabolic,
which corresponds to inverting $A_P$ and changing to the negatives of the roots.
So this does indeed extend to a diffeomorphism.

Next consider the right action of $g\in G$ on the image of some $\gamma_S.$
The action of $K$ extends smoothly (and freely) up to the boundary by
conjugating the 'incoming' parabolic $P$ to $P''$ and over this action on
$\cF_S$ mapping $A_P$ to $A_{P''}$ and $F(P,P')$ to $F(P'',P').$ So it
suffices to consider the action of $P$ near a boundary fiber over $(P',P).$
In the Langlands decomposition the action of $M_P$ on $F(P,P')$ is smooth,
free and transitive and near the identity in $A_P$ the action factors
through that on $\bA_P,$ in particular fixing the boundary. So it remains
to check the action of the unipotent group $N_P.$ Since $a\in A_P$ acts on
$N_P$ by conjugation, as in \eqref{CSG.178}, 
\begin{equation}
an=n_aa,\ n_a\text{ smooth on }\bA_P  
\label{CSG.214}\end{equation}
and with $n_a$ vanishing when $a$ is at the point at infinity. Thus $N_P$
fixes the boundary fibers above $(P,P')$ for all $P'.$ The smoothness of
the action follows. The b-normality of the action follows from the fact
that at a boundary fiber above $(P,P')$ the action of $A_P$ is through
scaling on the normal fiber.

This discussion shows that over the boundary face $F_S\longrightarrow
\cF_S\times\cF_S$ the right action fixes the left factor of $\cF_S$ in the
base and acts transitively on the fibers.  Since inversion conjugates the
right to left action and reverses the factors in the base it follows that
the action of $G\times G$ is b-transitive.

Finally, that the mininality condition holds can be seen from the local
homogeneity in \eqref{CSG.178}.
\end{proof}

\providecommand{\bysame}{\leavevmode\hbox to3em{\hrulefill}\thinspace}
\providecommand{\MR}{\relax\ifhmode\unskip\space\fi MR }
\providecommand{\MRhref}[2]{%
  \href{http://www.ams.org/mathscinet-getitem?mr=#1}{#2}
}
\providecommand{\href}[2]{#2}

\end{document}